\crefname{ineq}{inequality}{inequalities}
\numberwithin{table}{section}
\newcommand\arXiv[1]{arXiv:\href{http://arXiv.org/abs/#1}{#1}}
\newtheorem{theorem}{Theorem}[section]
\newtheorem{lemma}[theorem]{Lemma}
\newtheorem{proposition}[theorem]{Proposition}
\newtheorem{corollary}[theorem]{Corollary}
\newtheorem{question}[theorem]{Question}
\theoremstyle{definition}
\newtheorem{definition}[theorem]{Definition}
\newtheorem{remark}[theorem]{Remark}
\newcommand\C{\mathbb{C}}
\newcommand\Quat{\mathbb{H}}
\newcommand\F{\mathbb{F}}
\newcommand\R{\mathbb{R}}
\newcommand\N{\mathbb{N}}
\newcommand\SL{\mathop{\mbox{\textup{SL}}}\nolimits}
\newcommand\Sp{\mathop{\mbox{\textup{Sp}}}\nolimits}
\newcommand\SU{\mathop{\mbox{\textup{SU}}}\nolimits}
\newcommand\GL{\mathop{\mbox{\textup{GL}}}\nolimits}
\newcommand\SO{\mathop{\mbox{\textup{SO}}}\nolimits}
\renewcommand\O{\mathop{\mbox{\textup{O}}}\nolimits}
\newcommand\U{\mathop{\mbox{\textup{U}}}\nolimits}
\newcommand\irr{{\mathop\textup{Irr}}}
\newcommand\Tr{{\mathop\textup{Tr}}}
\newcommand\diag{{\mathop\textup{diag}}}
\title{Matrix multiplication via matrix groups}
\author{Jonah Blasiak\thanks{Department of Mathematics, Drexel University, Philadelphia, PA 19104,  USA,  jblasiak@gmail.com} \and Henry Cohn\thanks{Microsoft Research New England, One Memorial Drive, Cambridge, MA 02142,  USA, cohn@microsoft.com} \and Joshua A.\ Grochow\thanks{Departments of Computer Science and Mathematics, University of Colorado Boulder, Boulder, CO 80309, USA, jgrochow@colorado.edu} \and Kevin Pratt\thanks{School of Computer Science, Carnegie Mellon University, Pittsburgh, PA 15213, USA, kpratt@andrew.cmu.edu} \and Chris Umans\thanks{Department of Computing and Mathematical Sciences, California Institute of Technology, Pasadena, CA 91125, USA, umans@cs.caltech.edu}}
\date{}
\begin{document}

\maketitle 

\thispagestyle{empty}

\begin{abstract}
In 2003, Cohn and Umans proposed a group-theoretic approach to bounding the exponent of matrix multiplication. Previous work within this approach ruled out certain families of groups as a route to obtaining $\omega = 2$, while other families of groups remain potentially viable. In this paper we turn our attention to matrix groups, whose usefulness within this framework was relatively unexplored.

We first show that groups of Lie type cannot prove $\omega=2$ within the group-theoretic approach. This is based on a representation-theoretic argument that identifies the second-smallest dimension of an irreducible representation of a group as a key parameter that determines its viability in this framework. Our proof builds on Gowers' result concerning product-free sets in quasirandom groups. We then give another barrier that rules out certain natural matrix group constructions that make use of subgroups that are far from being self-normalizing.

Our barrier results leave open several natural paths to obtain $\omega = 2$ via matrix groups. To explore these routes we propose working in the continuous setting of Lie groups, in which we develop an analogous theory. Obtaining the analogue of $\omega=2$ in this potentially easier setting is a key challenge that represents an intermediate goal short of actually proving $\omega = 2$. We give two constructions in the continuous setting, each of which evades one of our two barriers.
\end{abstract}

\newpage
\setcounter{page}{1}

\section{Introduction}\label{sec:intro}
The exponent of matrix multiplication is the smallest number $\omega$ such that for each $\varepsilon > 0$, there exists an algorithm for multiplying two $n \times n$ matrices using $O(n^{\omega+\varepsilon})$ field operations.  It is clear that $\omega \ge 2$, and a long line of work has led to the best upper bound currently known of $\omega < 2.37286$ from \cite{alman2021refined}. It is a longstanding and well-known open problem to resolve the conjecture that $\omega = 2$.

In \cite{cu2003} a group-theoretic approach to bounding $\omega$ was proposed. Given any finite group $G$ and three subsets of $G$ satisfying a certain condition (the \emph{triple product property}), this approach yields an upper bound on $\omega$ by reducing an instance of matrix multiplication to multiplication in the group algebra of $G$. This approach can capture the Coppersmith--Winograd family of algorithms \cite{cohn2005group}, which includes the current record bound. While some barriers are known for the group-theoretic approach, for instance that abelian groups cannot not prove $\omega < 3$ (see \cite[Lemma 3.1]{cu2003}) or that certain generalizations of the constructions in \cite{cohn2005group} cannot achieve $\omega = 2$  \cite{blasiak2017cap,BCCGU,sawin2018bounds}, the possibility that one could show $\omega = 2$ using a suitable family of nonabelian groups remains wide open.\footnote{Other barrier results rule out different generalizations of the Coppersmith--Winograd approach, such as \cite{AFLG,AWBarrier, AlmanBarrier, CVZ}, but it remains unclear the extent of the implications those barriers have for the group-theoretic approach.}

Previous work on the group-theoretic approach has focused mainly on families of very \emph{non-simple} groups, i.e., groups built up from simple groups through repeated group extension. For example, the best bounds known on $\omega$ can be obtained using a semidirect product of the symmetric groups with direct products of abelian groups. At the same time, the aforementioned barriers rule out these kinds of constructions and certain generalizations  \cite{blasiak2017cap,BCCGU,sawin2018bounds}. But this has left the simple groups---in some sense the opposite end of the spectrum of finite groups---largely unexplored.

In this paper we address this gap in knowledge by studying \emph{finite groups of Lie type}\footnote{Specifically, by a group of Lie type we mean any one of the (possibly twisted) Chevalley groups, including the Suzuki and Ree groups, or the quotient of such a group by its center. A Chevalley group is the fixed points of a Steinberg endomorphism in a semisimple algebraic group over a finite field (see Definition~21.6 and Table~22.1 in \cite{MalleTesterman}).
Among others, this list includes $\SL(n,q)$, $\SU(n,q)$, $\SO(2n+1,q)$, $\Sp(2n,q)$, $\SO^+(2n,q)$, and $\SO^-(2n,q)$. Obtaining simple groups can require taking the quotient by the center, but that does not change our conclusions, such as \Cref{cor:liebarrier}.} in the framework of \cite{cu2003}. This is an important class of groups that contains all of the finite simple groups except alternating or cyclic groups and finitely many sporadic groups. Some good examples to keep in mind are the classical matrix groups such as the group $\SL(n,q)$ of determinant $1$ matrices over the finite field $\F_q$ or the group of $n \times n$ orthogonal matrices over $\F_q$ with respect to a quadratic form.

\subsection{Results}
We start by showing that triple product property constructions (see \Cref{def:tpp}) in groups of Lie type cannot prove any bound on $\omega$ better than $2+\varepsilon$ for some absolute constant $\varepsilon > 0$ (\Cref{cor:liebarrier}). This resolves a question asked in \cite{cu2003}. Our proof combines a representation-theoretic argument with known bounds on the dimension and number of irreducible representations in groups of Lie type \cite{landazuri1974minimal,fulman2012bounds}. 
More broadly, we identify the second-smallest dimension of an irreducible representation as a key parameter of a group that determines its viability for the approach of \cite{cu2003}: \Cref{thm:gowerstrick} shows that groups where this quantity is large cannot yield good bounds on $\omega$. For example, any family of groups for which the second-smallest dimension of an irreducible representation grows as a power of the size of the group cannot yield $\omega = 2$. It had been known since \cite{cu2003} that the largest dimension played a key role in the quality of the bound, but small dimensions were not previously understood to be relevant.

This first barrier builds on Gowers' theorem on product-free sets in quasirandom groups \cite{gowers2008quasirandom}. We note that whereas Gowers' result involves the minimum dimension of a \emph{nontrivial} representation, the additional structure of our problem allows us to consider the second-smallest dimension of an irreducible representation (in other words, we can skip any other representations of dimension~$1$). This gives us lower bounds in groups where Gowers' result does not apply, such as $\SL(n,q)$. It is interesting that while triple product property constructions in abelian groups cannot yield nontrivial bounds on $\omega$, this barrier shows that \emph{highly nonabelian} groups also have significant limitations.

Next we show in \Cref{thm:normbarrier} that subgroups with large normalizers cannot be used in a triple product property construction to obtain $\omega = 2$. This barrier is particularly effective in the setting of matrix groups. For example, one cannot obtain $\omega = 2$ via  a triple product property construction using three subgroups inside $\GL(n,q)$ for varying $q$ and any fixed $n$, or even inside products of such groups.

Our first barrier result rules out obtaining exponent~$2$ from finite groups of Lie type, but still leaves open the possibility that such groups could serve as building blocks in efficient algorithms for matrix multiplication. For example, the \emph{direct product} of such groups
escapes the barrier entirely, since the second-smallest dimension of an irreducible representation of a direct product equals the second-smallest dimension among the irreducible representations of the factors. Similarly, our normalizer barrier suggests
that constructions should aim to use subgroups that are \emph{self-normalizing}. We therefore view our barriers as giving us useful information about what a possible construction
using finite groups of Lie type must look like, if it is to give $\omega =  2$.

In the second part of the paper, we give constructions that naturally
use a direct product in a critical way (\Cref{thm:slnpow}), and constructions that use self-normalizing
subgroups (\Cref{thm:3conj}). It is important to note that we know of \emph{no} constructions in
\emph{finite} groups of Lie type that even meet a certain ``packing bound'' (\Cref{def:packingbound}), a prerequisite for obtaining $\omega = 2$. This
remains an important challenge. In lieu of such constructions, we direct
our efforts at obtaining constructions in \emph{continuous} Lie groups, which seems easier and
mathematically cleaner to work in, and where we can ask direct analogues
of the main questions. Here we have constructions that meet the packing bound. Moreover, we give examples
that use direct products and self-normalizing subgroups, desiderata by the barrier results.

Our constructions do not achieve the Lie analogue of beating exponent $3$, and we
suggest improving them and finding other examples as key challenges highlighted by this work.

\subsection{Outline}

In the next section we review the group-theoretic approach to bounding $\omega$. In \Cref{sec:barriers} we give our barriers. We then introduce the Lie exponent in \Cref{sec:lieexp} and give our constructions. We conclude with some questions in \Cref{sec:quest}.

\section{Background}

In this section we review the approach of \cite{cu2003} to bounding $\omega$. For a finite group $G$, let $\irr(G)$ denote the set of equivalence classes of irreducible complex representations of $G$, and for $X \subseteq G$, let $Q(X) = \{x x'^{-1}: x,x' \in X\}$ be the quotient set of $X$ and $X^{-1} = \{x^{-1} : x \in X\}$ be its collection of inverses. 

\begin{definition}\cite[Definition 2.1]{cu2003}\label{def:tpp}
We say that subsets $S$, $T$, and $U$ of a group $G$ satisfy the \emph{triple product property} if for all $s \in Q(S)$, $t \in Q(T)$, and $u \in Q(U)$,
\[stu = 1 \implies s=t=u=1.\]
\end{definition}

The simplest case is when the three subsets of $G$ are subgroups, in which case we do not need to take their quotient sets since $Q(H)=H$ for every subgroup $H$. However, the known constructions that achieve nontrivial upper bounds for $\omega$ do not use subgroups \cite{cohn2005group}.

Given three subsets of $G$ that satisfy the triple product property, \cite{cu2003} shows how to reduce a matrix multiplication problem to convolution of functions on $G$ (in other words, multiplication in the group algebra $\C[G]$). This reduction yields the following inequality for the exponent of matrix multiplication $\omega$:

\begin{theorem} \textup{\cite[Theorem 4.1]{cu2003}} \label{thm:omegabound}
If $S$, $T$, and $U$ satisfy the triple product property in $G$, then
\[(|S|\,|T|\,|U|)^{\omega/3} \le  \sum_i d_i^\omega,\]
where $d_i \in \N$ are the dimensions of the irreducible representations of $G$. 
\end{theorem}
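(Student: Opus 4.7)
My plan is to reduce the rectangular matrix multiplication tensor $\langle |S|, |T|, |U|\rangle$ to the multiplication tensor of the group algebra $\C[G]$, then use Wedderburn's theorem and Schönhage's asymptotic sum inequality.

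First, given matrices $A \in \C^{S \times T}$ and $B \in \C^{T \times U}$, I would form the group algebra elements
\[
\hat A = \sum_{s \in S,\, t \in T} A_{s,t}\, s^{-1} t \qquad\text{and}\qquad \hat B = \sum_{t' \in T,\, u \in U} B_{t',u}\, t'^{-1} u
\]
in $\C[G]$ and look at the coefficient of $s^{-1} u$ in the product $\hat A \hat B$ for $s \in S$ and $u \in U$. The diagonal contributions (with $t = t'$) give exactly $(AB)_{s,u}$, so I would need to show that (a) no other terms $A_{s_1,t} B_{t',u_1} s_1^{-1} t t'^{-1} u_1$ land on any target group element $s_2^{-1} u_2$, and (b) different target group elements $s^{-1}u$ are distinct. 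Both reduce to an equation of the form $q_S \cdot q_T \cdot q_U = 1$ with $q_S \in Q(S)$, $q_T \in Q(T)$, $q_U \in Q(U)$, at which point the triple product property forces everything to collapse. This step is the conceptual heart of the argument; the bookkeeping needs to be done carefully but is routine once set up correctly.

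Next, by Maschke and Wedderburn, $\C[G]$ decomposes as $\bigoplus_i M_{d_i}(\C)$, so its multiplication tensor is isomorphic to $\bigoplus_i \langle d_i, d_i, d_i\rangle$. Combining with the previous step, the matrix multiplication tensor $\langle |S|, |T|, |U| \rangle$ is a restriction of $\bigoplus_i \langle d_i, d_i, d_i\rangle$, so
\[
R\bigl(\langle |S|, |T|, |U|\rangle\bigr) \le R\bigl(\textstyle\bigoplus_i \langle d_i, d_i, d_i\rangle\bigr) \le \sum_i R(\langle d_i,d_i,d_i\rangle).
\]

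Finally, I would invoke Schönhage's asymptotic sum inequality, which states that $(abc)^{\omega/3} \le R(\langle a,b,c\rangle)$ (so in particular $R(\langle d_i, d_i, d_i\rangle) \le C_\epsilon d_i^{\omega+\epsilon}$), to conclude $(|S|\,|T|\,|U|)^{\omega/3} \le \sum_i R(\langle d_i, d_i, d_i\rangle)$. To remove the unwanted constants and the $\epsilon$ and get the clean inequality $(|S|\,|T|\,|U|)^{\omega/3} \le \sum_i d_i^\omega$, I would apply this argument to the $N$-th tensor power: since $\langle|S|,|T|,|U|\rangle^{\otimes N}$ restricts to $\bigoplus_{(i_1,\ldots,i_N)} \langle \prod_j d_{i_j}, \prod_j d_{i_j}, \prod_j d_{i_j}\rangle$, the same chain gives $(|S|\,|T|\,|U|)^{N\omega/3} \le C_\epsilon \bigl(\sum_i d_i^{\omega+\epsilon}\bigr)^N$; taking $N$-th roots and letting $N \to \infty$ and $\epsilon \to 0$ yields the claimed bound. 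The main subtlety is the triple product property verification in step one; the asymptotic cleanup is standard once the restriction into $\bigoplus_i \langle d_i,d_i,d_i\rangle$ is in hand.
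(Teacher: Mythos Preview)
The paper does not actually prove this theorem; it is quoted as background from \cite[Theorem~4.1]{cu2003} and used as a black box. So there is nothing in the present paper to compare your argument against.

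That said, your sketch is correct and is essentially the original Cohn--Umans argument: embed $\langle |S|,|T|,|U|\rangle$ into multiplication in $\C[G]$ via the triple product property, decompose $\C[G]\cong\bigoplus_i M_{d_i}(\C)$ by Wedderburn, and then use the tensor-power/amortization trick to strip the constants and the~$\varepsilon$. Two small comments. First, your triple product verification is fine: the equation $s_1^{-1}t\,t'^{-1}u_1=s_2^{-1}u_2$ rearranges to $(s_2 s_1^{-1})(t t'^{-1})(u_1 u_2^{-1})=1$ with the three factors in $Q(S)$, $Q(T)$, $Q(U)$ respectively, which is exactly what is needed. Second, what you call ``Sch\"onhage's asymptotic sum inequality'' in the form $(abc)^{\omega/3}\le R(\langle a,b,c\rangle)$ is really just the elementary consequence of symmetrizing $\langle a,b,c\rangle\otimes\langle b,c,a\rangle\otimes\langle c,a,b\rangle\cong\langle abc,abc,abc\rangle$ together with the recursion defining~$\omega$; Sch\"onhage's theorem proper concerns direct sums and is not strictly needed here, since your tensor-power step already handles the sum. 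This is only a naming quibble and does not affect the validity of the argument.
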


If $|S|$, $|T|$, and $|U|$ are large and the dimensions $d_i$ are not too large, then this inequality yields an upper bound for $\omega$. For example, if
\[|S|\,|T|\,|U| >  \sum_i d_i^3,\]
then it proves that $\omega<3$.
Using \Cref{thm:omegabound}, it was shown in \cite{cohn2005group} that $\omega < 2.41$. In fact, this framework is powerful enough to capture the Coppersmith-Winograd family of algorithms, including the latest record of $\omega < 2.37286$ from \cite{alman2021refined}.

In the notation of \Cref{thm:omegabound}, we always have $\sum_i d_i^\omega \ge \sum_i d_i^2 = |G|$. Thus, the inequality in the theorem never implies a better bound on $\omega$ than $(|S| \, |T| \, |U|)^{\omega/3} \le |G|$ would. This inequality is always consistent with $\omega=2$, because the multiplication maps from $S \times T$, $T \times U$, and $S \times U$ to $G$ must be injective to satisfy the triple product property, and therefore $|S| \, |T| \, |U| \le |G|^{3/2}$. This gives us a necessary condition for proving that $\omega=2$ via \Cref{thm:omegabound}:

\begin{definition}\label{def:packingbound}
We say a sequence $G_1,G_2,\dots$ of finite groups \emph{meets the packing bound} if there exist subsets $S_i$, $T_i$, and $U_i$ of $G_i$ satisfying the triple product property such that
\[
|S_i| \, |T_i| \, |U_i| = |G_i|^{3/2-o(1)}
\]
as $i \to \infty$.
\end{definition}

Several families of groups meeting the packing bound were constructed in \cite{cu2003}. In the terminology of \cite{cu2003}, meeting the packing bound means the pseudo-exponent of $G_i$ converges to~$2$ as $i \to \infty$. A simple argument \cite[Lemma~3.1]{cu2003} shows that this can happen only if $|G_i| \to \infty$. 

Meeting the packing bound is a necessary condition for \Cref{thm:omegabound} to yield $\omega=2$: if a family of groups contains no sequence meeting the packing bound, then there is a constant $\varepsilon>0$ such that no group in the family can prove an upper bound on $\omega$ better than $2+\varepsilon$ via \Cref{thm:omegabound}. However, meeting the packing bound is not in itself sufficient to achieve $\omega=2$, or even $\omega<3$. 

\section{Barriers for matrix groups}\label{sec:barriers}

In this section we explain the two barriers mentioned in the introduction. Each of them is based on an idea that is particularly relevant for matrix groups, although we formulate the bounds in greater generality.

\subsection{A representation-theoretic barrier}

We begin by proving our representation-theoretic barrier, which we then apply to groups of Lie type. Our proof of \Cref{thm:gowerstrick} follows the Fourier-analytic proof of Gowers' theorem on mixing in quasirandom groups (see, for example, \cite[Lemma 2.2]{breuillard2014brief}). Our barrier is a function of the second-smallest dimension of an irreducible representation of $G$. Because we use this parameter frequently, we introduce notation for it:

\begin{definition}
For a finite nonabelian group $G$, let $n(G) \coloneqq \min_{\pi \in \irr(G) : \ \dim \pi > 1} \dim \pi$ be the smallest dimension of an irreducible representation of $G$ of dimension greater than $1$.
\end{definition}

\begin{theorem}\label{thm:gowerstrick}
If subsets $S$, $T$, and $U$ satisfy the triple product property in a finite nonabelian group $G$, then \[|S|\,|T|\,|U| \le \frac{|G|^{3/2}}{n(G)^{1/2}} + |G|.\] 
\end{theorem}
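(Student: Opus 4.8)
The plan is to run the Fourier-analytic ``quasirandomness'' argument in the style of Gowers, adapted to exploit the triple product property. First I would set up notation: for a function $f \colon G \to \C$, write $\widehat f(\pi)$ for the operator-valued Fourier coefficient at $\pi \in \irr(G)$, and recall Plancherel, $\sum_{g} |f(g)|^2 = \frac{1}{|G|}\sum_\pi d_\pi \|\widehat f(\pi)\|_{\mathrm{HS}}^2$, together with the convolution identity $\widehat{f * h}(\pi) = \widehat f(\pi)\widehat h(\pi)$. The idea is to count, with multiplicity, the number of solutions to $s t u = 1$ with $s \in Q(S)$, $t \in Q(T)$, $u \in Q(U)$ in a weighted form; the triple product property forces this count to collapse to exactly the contribution of $s = t = u = 1$.

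Concretely, I would let $S$ also denote the indicator $\mathbf{1}_S$, and consider the quantity $N = \sum_{s,t,u} \mathbf{1}_S * \mathbf{1}_{S^{-1}}(s)\,\mathbf{1}_T * \mathbf{1}_{T^{-1}}(t)\,\mathbf{1}_U * \mathbf{1}_{U^{-1}}(u) \cdot \mathbf{1}[stu = 1]$. On the one hand, $\mathbf{1}_S * \mathbf{1}_{S^{-1}}(s)$ counts representations $s = x x'^{-1}$ with $x, x' \in S$; since such $s$ lies in $Q(S)$, the triple product property says the only term surviving is $s = t = u = 1$, whose coefficient is exactly $(|S|\,|T|\,|U|)$ — wait, more precisely $\mathbf{1}_S*\mathbf{1}_{S^{-1}}(1) = |S|$, so $N = |S|\,|T|\,|U|$. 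On the other hand, expanding $N$ in Fourier space using $\sum_g \phi(g)\psi(g)\chi(g) \mathbf{1}[\text{product}=1]$-type orthogonality, $N = \frac{1}{|G|}\sum_\pi d_\pi \operatorname{Tr}\!\bigl(\widehat{A}(\pi)\widehat{B}(\pi)\widehat{C}(\pi)\bigr)$ where $A = \mathbf{1}_S * \mathbf{1}_{S^{-1}}$ etc., so $\widehat A(\pi) = \widehat{\mathbf{1}_S}(\pi)\widehat{\mathbf{1}_S}(\pi)^{\dagger} \succeq 0$. The trivial representation contributes $\frac{1}{|G|}|S|^2|T|^2|U|^2$. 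For the nontrivial part I would bound $\bigl|\sum_{\pi \neq 1} d_\pi \operatorname{Tr}(\widehat A \widehat B \widehat C)\bigr|$: since each $\widehat A(\pi)$ is positive semidefinite, $\operatorname{Tr}(\widehat A \widehat B \widehat C) \le \|\widehat A(\pi)\|_{\mathrm{op}} \cdot \operatorname{Tr}(\widehat B \widehat C) \le \ldots$, and after a Cauchy–Schwarz pairing two of the three factors, the key point is that for $\pi \neq 1$ (hence $d_\pi \ge n(G)$ once we discard the $1$-dimensional representations, which a separate elementary argument handles) we can pull out a factor $\frac{1}{n(G)}$ via $d_\pi \|\widehat{\mathbf{1}_S}(\pi)\|_{\mathrm{HS}}^2 \le$ (total Plancherel mass of $\mathbf{1}_S$) $= |S|$, and the extra $d_\pi \ge n(G)$ in the denominator is what produces the $n(G)^{1/2}$ savings after taking the square root.

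Assembling: $|S|\,|T|\,|U| = \frac{|S|^2|T|^2|U|^2}{|G|} + (\text{error})$, and the error is at most $\frac{1}{n(G)}\cdot\frac{|S|^2|T|^2|U|^2}{|G|}$-ish, which after rearranging to isolate $|S|\,|T|\,|U|$ (treating it as a variable $P$ satisfying $P \ge \frac{P^2}{|G|} - \frac{C P^2}{n(G)|G|}$ or the appropriate inequality) yields $P \le \frac{|G|}{1 - C/n(G)}$ or, in the cleaner form the theorem states, $P \le |G|^{3/2}/n(G)^{1/2} + |G|$; the exact bookkeeping of constants is chosen so the Cauchy–Schwarz step lands on this form. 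I would also need the trivial-character case: if $G$ has $1$-dimensional representations beyond the trivial one, restrict attention to the kernel-of-all-of-them (the commutator subgroup's role) or simply note those representations contribute to a separate term bounded by $|G|$, which is where the additive $+|G|$ comes from.

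The main obstacle I anticipate is the combinatorial-to-analytic bridge in the error term: getting the power of $n(G)$ exactly $1/2$ (rather than, say, $n(G)^{1/3}$ from a naive three-way Hölder) requires carefully using the positive semidefiniteness of $\widehat A, \widehat B, \widehat C$ so that one can bound $\operatorname{Tr}(\widehat A \widehat B \widehat C)$ by $\|\widehat A\|_{\mathrm{op}}\|\widehat B\|_{\mathrm{HS}}\|\widehat C\|_{\mathrm{HS}}$ and then feed $\|\widehat A(\pi)\|_{\mathrm{op}} \le \|\widehat A(\pi)\|_{\mathrm{HS}} \le |S|\cdot(\text{something})$ together with the dimension bound $d_\pi \ge n(G)$ landing on only \emph{one} of the three Plancherel sums — spending it twice would cost a factor but spending it once is exactly enough. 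The secondary subtlety is handling $1$-dimensional representations cleanly so the statement holds for all nonabelian $G$ and not just perfect groups; I expect this costs the honest additive $|G|$ term and no more.
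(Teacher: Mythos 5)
Your proposal takes essentially the same Gowers-style Fourier-analytic route as the paper: expand the value at the identity of the six-fold convolution $1_S*1_{S^{-1}}*1_T*1_{T^{-1}}*1_U*1_{U^{-1}}$ in terms of $\sum_\pi d_\pi\Tr(\cdots)$, exploit the positive-semidefiniteness of $\widehat{1_S}(\pi)\widehat{1_S}(\pi)^\dagger$ to isolate the trivial representation's contribution $(|S|\,|T|\,|U|)^2/|G|$, and bound the remaining sum using Cauchy--Schwarz together with Plancherel, spending the dimension lower bound $d_\pi \ge n(G)$ exactly once. You correctly identify that spending it once (not three times) is the crux, and that PSD structure is what makes the $1$-dimensional contributions manageable.

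That said, two of your bookkeeping claims are wrong and would need to be fixed. First, the $+|G|$ does \emph{not} come from separately bounding the contribution of nontrivial $1$-dimensional representations by $|G|$. Those contributions are nonnegative (each is a product $|\pi(S)|^2|\pi(T)|^2|\pi(U)|^2 \ge 0$) and the paper simply discards them, keeping only the trivial representation. The $+|G|$ arises from the algebra at the end: one arrives at $|G|\,P \ge P^2 - P\cdot|G|^{3/2}/n(G)^{1/2}$ with $P = |S|\,|T|\,|U|$, and dividing by $P$ gives $P \le |G| + |G|^{3/2}/n(G)^{1/2}$. Second, and relatedly, your error term of the shape $\sim P^2/(n(G)|G|)$ has the wrong form; the error is linear in $P$, namely $\sim P\,|G|^{3/2}/n(G)^{1/2}$ (before dividing by $|G|$). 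These are not cosmetic: the wrong shape does not rearrange to the stated bound. Finally, one ingredient you should make explicit: the Parseval step works out cleanly because the triple product property makes the multiplication map $S^{-1}\times T \to G$ injective, so $1_{S^{-1}}*1_T$ is the honest indicator of the set $S^{-1}T$ with $|S^{-1}T| = |S|\,|T|$; Parseval applied to that indicator gives $\sum_\pi d_\pi\|\pi(S^{-1}T)\|^2 = |G|\,|S|\,|T|$ and hence the uniform bound $\|\pi(S^{-1}T)\| \le \sqrt{|S|\,|T|\,|G|/n(G)}$ for $d_\pi > 1$. Applying Plancherel to $1_S$ alone, as you suggest, is not quite the right move; grouping the six operators cyclically into three pair-products $\pi(S^{-1}T)$, $\pi(T^{-1}U)$, $\pi(U^{-1}S)$ is what makes everything cancel.
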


\begin{proof}
Let $1_X$ denote the indicator function of a subset $X \subseteq G$. For brevity, given $X \subseteq G$ and $\pi \in \irr(G)$,  we will write $\pi(X) \coloneqq \sum_{x \in X} \pi(x)$ and $d_\pi \coloneqq \dim \pi$.

Suppose that $S,T,U$ satisfy the triple product property. Equivalently, the value at the identity in the 6-fold convolution $1_S*1_{S^{-1}}*1_T *1_{T^{-1}}*1_U *1_{U^{-1}}$ equals $|S|\,|T|\,|U|$. The Fourier inversion formula says that a function $f \colon G \to \C$ can be reconstructed using the inner product $\langle X,Y \rangle = \Tr(X \overline{Y}{}^\top)$ (where $\overline{Y}$ denotes the entry-wise complex conjugate)
as
\[
f(g) = \frac{1}{|G|} \sum_{\pi \in \irr(G)} d_\pi \left\langle \sum_{h \in G} f(h) \pi(h), \pi(g)\right\rangle.
\]
Applying this formula to $f = 1_S*1_{S^{-1}}*1_T *1_{T^{-1}}*1_U *1_{U^{-1}}$ and $g=1$ yields
\[
|G|\,|S|\,|T|\,|U| = \sum_{\pi \in \irr(G)} d_\pi \Tr(\pi(S)\pi(S^{-1}) \pi(T)\pi(T^{-1})\pi(U)\pi(U^{-1})).
\]
When $d_\pi=1$,
\[
\pi(S)\pi(S^{-1}) = \sum_{s \in S} \pi(s) \sum_{s \in S} \overline{\pi(s)} = \left|\pi(S)\right|^2,
\]
which is a nonnegative real number, and $\pi(S) \pi(S^{-1}) = |S|^2$ if $\pi$ is the trivial representation.
Thus,
\begin{align*}
|G|\,|S|\,|T|\,|U| &\ge  (|S|\,|T|\,|U|)^2 + \sum_{\pi  : \ d_\pi > 1}  d_\pi \Tr(\pi(S)\pi(S^{-1}) \pi(T)\pi(T^{-1})\pi(U)\pi(U^{-1})  )\\
&= (|S|\,|T|\,|U|)^2 + \sum_{\pi : \ d_\pi > 1}  d_\pi \Tr(\pi(S^{-1}) \pi(T)\pi(T^{-1})\pi(U)\pi(U^{-1})\pi(S)). \end{align*}
By the Cauchy--Schwarz inequality,
\[|G|\,|S|\,|T|\,|U| \ge (|S|\,|T|\,|U|)^2 - \sum_{\pi : \ d_\pi > 1}  d_\pi \|\pi(S^{-1}T)\| \cdot  \|\pi(T^{-1}U) \| \cdot  \| \pi(U^{-1}S) \|,\]
where $\|\cdot\|$ denotes the Frobenius norm of a matrix (i.e., $\|M \|^2 = \Tr(M \overline{M}{}^\top)$).

Fourier inversion implies a nonabelian version of Parseval's identity, which states that for any function $f \colon G \to \mathbb{C}$, 
\[
 \sum_{g \in G} |f(g)|^2= \frac{1}{|G|}\sum_{\pi \in \irr(G)} d_\pi \left\|\sum_{g\in G}f(g)\pi(g)\right\|^2.
\]
Applying this formula with  $f = 1_{S^{-1}} * 1_T$, which is equal to the indicator function of $S^{-1}T$ by the triple product property, we obtain 
\[|S|\,|T|\,|G| = \sum_{\pi \in \irr(G)} d_\pi \|\pi(S^{-1}T)\|^2,\]
and thus for each $\pi \in \irr(G)$ with $d_\pi > 1$,
\[\|\pi(S^{-1} T)\| \le \sqrt{|S|\,|T|\,|G|/n(G)}.\]
Using this bound and the Cauchy--Schwarz inequality, we find that
\begin{align*}
|G|\,|S|\,|T|\,|U|  &\ge (|S|\,|T|\,|U|)^2 - \sqrt{|S|\,|T|\,|G|/n(G)} \sum_{\pi : \ d_\pi > 1}  d_\pi \|\pi(T^{-1}U)\| \cdot  \|\pi(U^{-1}S) \|\\
&\ge (|S|\,|T|\,|U|)^2 - \sqrt{|S|\,|T|\,|G|/n(G)} \sqrt{\sum_{\pi : \ d_\pi > 1}  d_\pi \|\pi(T^{-1}U)\|^2}  \sqrt{ \sum_{\pi : \ d_\pi > 1}  d_\pi\|\pi(U^{-1}S) \|^2 },
\end{align*}
and so by Parseval's identity,
\begin{align*}
|G|\,|S|\,|T|\,|U| &\ge (|S|\,|T|\,|U|)^2 - \sqrt{|S|\,|T|\,|G|/n(G)} \sqrt{|G|\,|T|\,|U|}  \sqrt{|G|\,|S|\,|U|}\\
 &= (|S|\,|T|\,|U|)^2 - |S|\,|T|\,|U|\,|G|^{3/2}/n(G)^{1/2}.
\end{align*}
We conclude that
\[|S|\,|T|\,|U| \le \frac{|G|^{3/2}}{n(G)^{1/2}} + |G|,\]
as desired.
\end{proof}

We immediately obtain the following corollary:

\begin{corollary} \label{cor:bigreps}
No sequence $G_1,G_2,\dots$ of finite groups satisfying $n(G_i) \ge \Omega(|G_i|^\delta)$ with $\delta>0$ can meet the packing bound.
\end{corollary}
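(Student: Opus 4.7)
The plan is to invoke Theorem \ref{thm:gowerstrick} as a black box: the hypothesis $n(G_i) \ge \Omega(|G_i|^\delta)$ immediately makes its right-hand side polynomially smaller than $|G_i|^{3/2}$, which is incompatible with the packing bound. I would argue by contradiction, supposing that subsets $S_i, T_i, U_i \subseteq G_i$ satisfy the triple product property and witness $|S_i|\,|T_i|\,|U_i| = |G_i|^{3/2 - o(1)}$.

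Fixing a constant $c > 0$ with $n(G_i) \ge c\,|G_i|^\delta$ for all sufficiently large $i$, Theorem \ref{thm:gowerstrick} then yields
\[
|S_i|\,|T_i|\,|U_i| \;\le\; c^{-1/2}\,|G_i|^{3/2 - \delta/2} + |G_i|.
\]
Setting $\eta \coloneqq \min(\delta/2,\, 1/2) > 0$, both terms on the right are $O(|G_i|^{3/2 - \eta})$. Since the paper records (just after Definition \ref{def:packingbound}) that meeting the packing bound forces $|G_i| \to \infty$, the inequality $|G_i|^{3/2 - o(1)} \le O(|G_i|^{3/2 - \eta})$ must eventually fail, giving the desired contradiction.

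There is no serious obstacle here; the corollary is essentially a rephrasing of Theorem \ref{thm:gowerstrick}. The one point to be careful about is the additive $|G_i|$ term in the theorem's conclusion: when $\delta \ge 1$, the term $|G_i|^{3/2 - \delta/2}$ is actually dominated by $|G_i|$, so one must take the worse of the two exponents, namely $\max(3/2 - \delta/2,\, 1)$. This quantity is nevertheless strictly less than $3/2$ for every $\delta > 0$, so the conclusion still holds uniformly across the whole range of $\delta$.
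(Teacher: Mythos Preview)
Your proposal is correct and matches the paper's approach: the paper simply states that the corollary follows immediately from \Cref{thm:gowerstrick}, and your argument spells out exactly those details, including the care needed with the additive $|G_i|$ term and the use of $|G_i|\to\infty$.
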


\begin{corollary}\label{cor:liebarrier}
There exists a constant $\varepsilon>0$ such that no
triple product property construction in a group of Lie type can yield an upper bound on $\omega$ better than $2+\varepsilon$.
\end{corollary}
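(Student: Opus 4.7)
The plan is to apply \Cref{thm:gowerstrick} together with classical bounds on two invariants of a finite group $G$ of Lie type: the second-smallest irreducible representation dimension $n(G)$, and the total number of irreducible representations $N \coloneqq |\irr(G)|$. I would invoke the Landazuri--Seitz theorem \cite{landazuri1974minimal} to obtain $n(G) \ge C_1 q^{ar}$ for a group of Lie type of rank $r$ over $\F_q$ (with $a, C_1 > 0$ depending only on the Lie type), and the Fulman--Guralnick bound \cite{fulman2012bounds} to obtain $N \le C_2 q^{br}$. The former is originally stated for the simple quotient $G/Z(G)$, but its projective irreducible representations lift to irreducible representations of $G$, and $|Z(G)|$ is negligible relative to $|G|$, so the bound transfers.

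To bring $N$ into play, I would lower bound the right-hand side of \Cref{thm:omegabound} via the power-mean inequality $\sum_i d_i^c \ge N^{1 - c/2}\,|G|^{c/2}$, valid for $c \ge 2$. Combining this with the bound $|S|\,|T|\,|U| \le 2\,|G|^{3/2}/n(G)^{1/2}$ from \Cref{thm:gowerstrick} and the inequality of \Cref{thm:omegabound}, one sees that no TPP triple $(S,T,U)$ in $G$ can certify $\omega \le c$ whenever
\[
\bigl(|G|^{3/2}/n(G)^{1/2}\bigr)^{c/3} \le N^{1 - c/2}\,|G|^{c/2}.
\]
Writing $\alpha \coloneqq \log n(G)/\log|G|$ and $\beta \coloneqq \log N/\log|G|$, this simplifies (up to $o(1)$) to the statement that the derivable upper bound on $\omega$ from $G$ is at least $6\beta/(3\beta - \alpha)$.

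The final step is a uniform lower bound on the ratio $\alpha/\beta$ across all Lie types. Since $|G| = q^{c_3 r^2 + O(r)}$ for a type-dependent constant $c_3$, both $\alpha$ and $\beta$ scale like $1/r$ in the large-rank regime, and a direct comparison of the exponents $a, b, c_3$ across types shows that $\alpha/\beta$ is bounded below by an absolute positive constant. This forces $6\beta/(3\beta - \alpha) \ge 2 + \varepsilon$ for some absolute $\varepsilon > 0$, giving the claimed barrier.

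The main obstacle is verifying the uniform lower bound on $\alpha/\beta$ across the full zoo of finite groups of Lie type---classical families and their twisted versions, the exceptional groups $E_6,E_7,E_8,F_4,G_2$, and the Suzuki and Ree series---in every asymptotic regime ($r \to \infty$ with $q$ fixed, $q \to \infty$ with $r$ bounded, or both). This is a finite but tedious case analysis, with the small-rank and bounded-rank families needing to be treated separately to ensure a single $\varepsilon > 0$ works uniformly; the quotient by $Z(G)$ and the difference between the Chevalley group and its simple quotient also require some bookkeeping, but only affect lower-order terms.
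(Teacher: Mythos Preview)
Your proposal is correct and takes essentially the same approach as the paper: both combine \Cref{thm:gowerstrick} with the Landazuri--Seitz lower bound on $n(G)$, the Fulman--Guralnick upper bound on the number of conjugacy classes, and the power-mean (convexity) inequality to lower-bound $\sum_i d_i^c$. The only organizational difference is that the paper splits into the bounded-rank case (where $n(G) \ge \Omega(|G|^\delta)$ and \Cref{cor:bigreps} applies directly) and the large-rank case (where the uniform estimates $n(G) \ge \Omega(q^r)$ and $N = O(q^r)$ give a bound approaching~$3$), thereby sidestepping the type-by-type case analysis of $\alpha/\beta$ that you anticipate.
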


This corollary is more subtle than the previous one, since it does not simply amount to a failure to meeting the packing bound.

\begin{proof}
First, we deal with the case of groups of Lie type of bounded rank. Such groups $G$ satisfy $n(G) \ge \Omega(|G|^\delta)$ for some constant $\delta>0$, as one can check from the bounds given in \cite{landazuri1974minimal}, and this condition suffices by \Cref{cor:bigreps}.

Now let $G$ be a group of Lie type of rank $r$ and dimension $d$ over $\F_q$. Then $|G| = \Theta(q^d)$ (see, for example, \cite[Table~24.1]{MalleTesterman}), and the lower bound $n(G) \ge \Omega(q^r)$ holds by \cite{landazuri1974minimal}. Hence by \Cref{thm:gowerstrick},
\[|S|\,|T|\,|U| \le |G|^{3/2}/\sqrt{n(G)} + |G| = O(q^{3d/2 - r/2}).\]

By \cite[Theorem~1.1]{fulman2012bounds}, there are $O(q^r)$ conjugacy classes in $G$. Let $d_1,\dots,d_m$ be the dimensions of the irreducible representations of $G$, where $m$ is the number of conjugacy classes of $G$.
We have $\sum_i d_i^2 = |G| = \Theta(q^d)$, and
\[
\frac{\sum_i d_i^\omega}{m} \ge \left(\frac{\sum_i d_i^2}{m}\right)^{\omega/2}
\]
since $x \mapsto x^{\omega/2}$ is a convex function. Hence $\sum_i d_i^\omega \ge \Omega(q^{r+\omega(d-r)/2})$, and therefore \Cref{thm:omegabound} cannot yield an upper bound on $\omega$ better than
\[\omega \le 3 \left ( \frac{r+ \log_q C}{r} \right )\]
for some absolute constant $C>0$. If $r$ is large enough, then this bound cannot approach $2$, and the case of bounded $r$ was dealt with above.
\end{proof}

Note that this corollary holds not just for groups of Lie type, but also for simple groups that are quotients of groups of Lie type by their centers. In particular, the second part of argument depends on only two bounds, namely a lower bound for $n(G)$ and an upper bound for the number of conjugacy classes in $G$. Both of these bounds are preserved by taking the quotient: a quotient group always has at most as many conjugacy classes as the original group, and every irreducible representation of the quotient yields an irreducible representation of the original group.

Another consequence of \Cref{thm:gowerstrick} is a slightly sharper estimate for how close $|S| \, |T| \, |U|$ can come to $|G|^{3/2}$ when $S$, $T$, and $U$ satisfy the triple product property in $G$. It follows from \cite[Lemma~3.1]{cu2003} that $|S| \, |T| \, |U| < |G|^{3/2}$, but this inequality  does not rule out the possibility that $|S|$, $|T|$, and $|U|$ might be a large as $\lfloor |G|^{1/2} - 1 \rfloor$. The following corollary shows that this cannot happen when $|G|$ is sufficiently large.

\begin{corollary}
If subsets $S$, $T$, and $U$ satisfy the triple product property in a finite group $G$, then $|S|\,|T|\,|U|\le |G|^{3/2}/\sqrt{2}+|G|$.
\end{corollary}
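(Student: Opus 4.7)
The plan is to split into two cases based on whether $G$ is abelian, since \Cref{thm:gowerstrick} is stated only for nonabelian $G$.

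First, suppose $G$ is nonabelian. Then $G$ has at least one irreducible representation of dimension greater than $1$, and since irreducible dimensions are positive integers, the smallest such dimension satisfies $n(G) \ge 2$. Plugging this directly into \Cref{thm:gowerstrick} yields
\[
|S|\,|T|\,|U| \le \frac{|G|^{3/2}}{\sqrt{n(G)}} + |G| \le \frac{|G|^{3/2}}{\sqrt{2}} + |G|,
\]
which is the desired bound.

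Second, suppose $G$ is abelian. Then the triple product property simplifies enormously: if $s_1 t_1 u_1 = s_2 t_2 u_2$ for some $s_i \in S$, $t_i \in T$, $u_i \in U$, then using commutativity we can rewrite this as $(s_1 s_2^{-1})(t_1 t_2^{-1})(u_1 u_2^{-1}) = 1$, and the TPP forces $s_1 = s_2$, $t_1 = t_2$, $u_1 = u_2$. Hence the multiplication map $S \times T \times U \to G$ is injective, giving $|S|\,|T|\,|U| \le |G|$, which is well below $|G|^{3/2}/\sqrt{2} + |G|$. (This is essentially \cite[Lemma~3.1]{cu2003}.)

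There is no real obstacle here: the only step requiring thought is noticing that for nonabelian $G$ the quantity $n(G)$ is automatically at least~$2$, and then treating the abelian case separately by an elementary injectivity argument. Combining the two cases establishes the corollary.
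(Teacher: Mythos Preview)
Your proof is correct and matches the paper's approach exactly: split into the abelian case (where $|S|\,|T|\,|U|\le |G|$ by the injectivity argument of \cite[Lemma~3.1]{cu2003}) and the nonabelian case (where $n(G)\ge 2$ and \Cref{thm:gowerstrick} applies). The only difference is that you spell out the abelian injectivity argument, whereas the paper just cites the lemma.
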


\begin{proof}
If $G$ is abelian, then $|S|\,|T|\,|U| \le |G|$ by \cite[Lemma~3.1]{cu2003}. Othewise $n(G) \ge 2$ and the conclusion follows from \Cref{thm:gowerstrick}.
\end{proof}

\subsection{A barrier for subgroups that are not self-normalizing}

In contrast to the previous barrier, which follows from properties of the containing group, we now give a barrier in terms of the three subsets used in a triple product property construction. It will apply only to the case of three subgroups, as opposed to arbitrary subsets.

For $X \subseteq G$, let $N(X) = \{g \in G : gXg^{-1} = X\}$
denote the normalizer of $X$ in $G$, and let $Z(G) = \{g \in G : gh = hg \text{ for all } h \in G\}$
denote the center of $G$.

\begin{theorem}\label{thm:normbarrier}
Suppose that subgroups $H_1$, $H_2$, and $H_3$ satisfy the triple product property in a finite group $G$, and let $s_i = |N(H_i)|/|H_i|$. Then
\[|H_1|\,|H_2|\,|H_3| \le \frac{|G|^{3/2}}{(s_1s_2s_3)^{1/4}}.\]
\end{theorem}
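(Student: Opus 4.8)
The plan is to exploit the normalizer structure to find, inside each $H_i$, a larger "fattened" set that still interacts with the other two subgroups in a controlled way, and then feed this into a counting argument like the one behind the packing bound ($|S|\,|T|\,|U| \le |G|^{3/2}$). The key observation is that if $g \in N(H_i)$, then the coset $gH_i = H_i g$ is a single left coset that is also a single right coset, and more importantly $gH_ig^{-1} = H_i$, so translating one of the subgroups by a normalizing element does not destroy the subgroup. I would first unpack what the triple product property gives us: the maps $H_1 \times H_2 \to G$, $H_2 \times H_3 \to G$, and $H_1 \times H_3 \to G$ (via $(a,b) \mapsto ab^{-1}$, say) are all injective, which is exactly what yields the crude bound $|H_1|\,|H_2|\,|H_3| \le |G|^{3/2}$ after multiplying the three injectivity statements and taking square roots.

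Next I would try to improve each injectivity statement using the normalizers. Fix $i$, and consider the quotient-set condition involving $H_i$: for $h \in H_i$, $h'\in H_j$, $h''\in H_k$ (with $\{i,j,k\}=\{1,2,3\}$), $h h' h'' = 1$ forces all three to be $1$. I want to replace $H_i$ by the set $N(H_j)H_j \cdot \text{(something)}$ — more precisely, the right idea is that for any $n \in N(H_i)$, the sets $nH_i$ and $H_i$ are "equivalent" for the purposes of the triple product property with the other two subgroups, because conjugating the whole configuration by $n$ sends $(H_i, H_j, H_k)$ to $(H_i, n^{-1}H_jn, n^{-1}H_kn)$, still a triple product property triple, but with the same $H_i$. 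Running the injectivity counting for each coset representative $n$ ranging over $N(H_i)/H_i$ and observing that the $s_i$ translated copies of the $(H_j,H_k)$-products must be "spread out" in $G$, one should gain a factor of $s_i$ somewhere. The cleanest route is probably to show that the combined map
\[
N(H_i)/H_i \times H_j \times H_k \longrightarrow G, \qquad (nH_i, b, c) \longmapsto n b c^{-1} \ \text{(or similar)},
\]
is injective, giving $s_i\,|H_j|\,|H_k| \le |G|$; doing this for $i=1,2,3$, multiplying, and taking a fourth root produces $s_1 s_2 s_3 \cdot (|H_1|\,|H_2|\,|H_3|)^2 \le |G|^3$, i.e.\ the claimed bound $|H_1|\,|H_2|\,|H_3| \le |G|^{3/2}/(s_1 s_2 s_3)^{1/4}$.

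The main obstacle I anticipate is verifying that injectivity claim: one has to be careful that the triple product property, which is a statement about the three subgroups $H_1, H_2, H_3$, really does transfer to a statement about $N(H_i)$ and the other two subgroups. The subtlety is that $N(H_i)$ need not satisfy any triple product property on its own, so the injectivity of the map on $N(H_i)/H_i \times H_j \times H_k$ must be deduced purely from (a) the triple product property of the $H$'s and (b) the defining property $nH_in^{-1}=H_i$. Concretely, if $nbc^{-1} = n'b'c'^{-1}$ with $n,n'\in N(H_i)$, $b,b'\in H_j$, $c,c'\in H_k$, I would rearrange to isolate $n'^{-1}n \in N(H_i)$ and use that $n'^{-1}n H_i (n'^{-1}n)^{-1} = H_i$ to show that, modulo $H_i$ on the left, the element $n'^{-1}n$ lies in a quotient set of $H_i$, then invoke the triple product property to force $n'H_i = nH_i$, $b=b'$, $c=c'$. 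Getting the bookkeeping of left versus right cosets and the direction of the quotient-set maps exactly right — and making sure the exponent $1/4$ (rather than, say, $1/2$) is what the three-fold product actually yields — is where the real care is needed; the rest is formal.
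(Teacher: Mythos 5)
The key injectivity you propose --- that $(nH_i, b, c) \mapsto nbc^{-1}$ is injective on $N(H_i)/H_i \times H_j \times H_k$, equivalently $s_i\,|H_j|\,|H_k| \le |G|$ --- is false, and the bound it would yield, $|H_1|\,|H_2|\,|H_3| \le |G|^{3/2}/(s_1s_2s_3)^{1/2}$, is strictly stronger than the theorem and also false. (As a side remark, $s_1s_2s_3\,(|H_1|\,|H_2|\,|H_3|)^2 \le |G|^3$ would yield the exponent $1/2$, not the $1/4$ you wrote, so your algebra and your target were already inconsistent.) For a concrete counterexample, take $G = (\Z/2\Z)^2$ with $H_1 = \{0\}$, $H_2 = \langle (1,0) \rangle$, and $H_3 = \langle (0,1) \rangle$; these satisfy the triple product property, yet $N(H_1)=G$, so $s_1 = 4$ and $s_1\,|H_2|\,|H_3| = 16 > 4 = |G|$, while the theorem's bound $|H_1|\,|H_2|\,|H_3| \le |G|^{3/2}/(s_1s_2s_3)^{1/4}$ holds with equality ($4=4$). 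The obstruction you already flagged is exactly the fatal one: when $nbc^{-1}=n'b'c'^{-1}$ you can isolate $n'^{-1}n\in N(H_i)$, but membership in $N(H_i)$ does not place this element in $Q(H_i)=H_i$, so the triple product property is simply not available to force $nH_i = n'H_i$.

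The paper's proof uses a different and weaker observation that sidesteps this: the product map $(h_1,h_2,h_3)\mapsto h_1h_2h_3$ is injective on $H_1 \times (N(H_1)\cap H_2) \times H_3$. Here the middle element $h_2'$ lies in $H_2$ (so the triple product property applies) \emph{and} normalizes $H_1$, so from $h_1h_2h_3 = h_1'h_2'h_3'$ one gets $h_2'^{-1}(h_1'^{-1}h_1)h_2' \in H_1$ and hence the genuine violation $\bigl[h_2'^{-1}(h_1'^{-1}h_1)h_2'\bigr]\bigl[h_2'^{-1}h_2\bigr]\bigl[h_3h_3'^{-1}\bigr] = 1$. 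This gives $|H_1|\,|N(H_1)\cap H_2|\,|H_3|\le|G|$, and combining with $|N(H_1)\cap H_2|\ge|N(H_1)|\,|H_2|/|G|$ yields $|H_1|\,|N(H_1)|\,|H_2|\,|H_3|\le|G|^2$. Since $|N(H_i)|=s_i|H_i|$ appears only linearly, multiplying the three cyclic versions gives $s_1s_2s_3\,(|H_1|\,|H_2|\,|H_3|)^4\le|G|^6$, which is where the exponent $1/4$ comes from. To repair your approach you should conjugate by elements of $N(H_i)$ that also lie in one of the other subgroups, rather than by arbitrary normalizer coset representatives.
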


\begin{proof}
The main observation in this proof is that $|H_1|\,|N(H_1) \cap H_2|\,|H_3| \le |G|$ (and the analogous inequality for any permutation of $H_1$, $H_2$, and $H_3$). To prove this inequality, we will show that the map
\[(h_1,h_2,h_3) \mapsto h_1 h_2 h_3\]
is injective on $H_1 \times (N(H_1) \cap H_2) \times H_3$. If not, then there exist $(h_1, h_2, h_3) \ne (h_1', h_2', h_3')$ for which \[h_1h_2h_3 = h_1'h_2'h_3',\]
which implies that
\[h_2'^{-1}h_1'^{-1}h_1h_2h_3h_3'^{-1} = 1.\]
However, $h_2'^{-1}(h_1'^{-1}h_1)h_2'$ is another element $h_1'' \in H_1$ (not equal to $1$ if $h_1' \ne h_1$), since $h_2'$ is in the normalizer of $H_1$. We thus have $h_1''(h_2'^{-1}h_2)(h_3h_3'^{-1}) =1$ with not all three factors equal to 1, which contradicts the triple product property for $H_1$, $H_2$, and $H_3$.

Now this inequality implies that
\[|G| \ge |H_1| \,|N(H_1) \cap H_2| \,|H_3| =|H_1| \frac{|N(H_1)| \, |H_2|}{|N(H_1)H_2|} |H_3| \ge |H_1| \frac{|N(H_1)| \, |H_2|}{|G|} |H_3|.\]
The inequality in the theorem statement follows by repeating this argument with $H_2,H_3$ and then $H_3, H_1$ in place of $H_1$ and $H_2$ and taking the product.
\end{proof}

\begin{remark}
The same proof in fact works for subsets $S,T,U$ satisfying the triple product property, not just subgroups, and leads to the conclusion that
\[
|S|\,|T|\,|U| \leq \left(\frac{|G|^3}{|N(Q(S)) \cap T|\, |N(Q(T)) \cap U|\, |N(Q(U)) \cap S|}\right)^{1/2}.
\]
\end{remark}

The following corollary shows that triple product property constructions using subgroups of groups $G$ satisfying $|Z(G)| = \Omega(|G|^\delta)$ with $\delta>0$ cannot meet the packing bound. For example, this shows that triples of subgroups in $\GL(n,q)$ with fixed $n$ cannot meet the packing bound.\footnote{More generally, arbitrary subsets cannot meet the packing bound, because intersecting random translates of the subsets with $\SL(n,q)$ would give subsets of $\SL(n,q)$ meeting the packing bound in expectation, and we have seen that this is impossible since $\SL(n,q)$ a group of Lie type of bounded rank when $n$ is fixed.}

\begin{corollary}\label{cor:centerbarrier}
If subgroups $H_1$, $H_2$, and $H_3$ satisfy the triple product property in a finite group $G$, then
\[|H_1|\,|H_2|\,|H_3| \le \frac{|G|^{3/2}}{|Z(G)|^{1/2}}.\]
\end{corollary}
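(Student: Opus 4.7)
The plan is to deduce the corollary directly from \Cref{thm:normbarrier} by producing a uniform lower bound on the product $s_1 s_2 s_3$ in terms of $|Z(G)|$. The key observation is that central elements always lie in the normalizer of any subgroup, so for each $i$ we have $Z(G) \subseteq N(H_i)$, and hence $Z(G) \cdot H_i$ is a subgroup contained in $N(H_i)$. This immediately gives
\[
s_i \;=\; \frac{|N(H_i)|}{|H_i|} \;\ge\; \frac{|Z(G)\cdot H_i|}{|H_i|} \;=\; \frac{|Z(G)|}{|Z(G)\cap H_i|}.
\]
So writing $c_i \coloneqq |Z(G) \cap H_i|$, I get $s_1 s_2 s_3 \ge |Z(G)|^3/(c_1 c_2 c_3)$.

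Next I would bound $c_1 c_2 c_3$ from above using the triple product property. The claim is that the multiplication map
\[
(Z(G) \cap H_1) \times (Z(G) \cap H_2) \times (Z(G) \cap H_3) \longrightarrow Z(G), \qquad (z_1,z_2,z_3) \mapsto z_1 z_2 z_3,
\]
is injective. Indeed, if $z_1 z_2 z_3 = z_1' z_2' z_3'$ with all entries central, then because central elements commute with everything we may rearrange to obtain $(z_1 z_1'^{-1})(z_2 z_2'^{-1})(z_3 z_3'^{-1}) = 1$ in $G$. Since $z_i z_i'^{-1} \in H_i = Q(H_i)$, the triple product property forces each $z_i z_i'^{-1} = 1$. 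Therefore $c_1 c_2 c_3 \le |Z(G)|$.

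Combining the two bounds, $s_1 s_2 s_3 \ge |Z(G)|^3/|Z(G)| = |Z(G)|^2$, and hence $(s_1 s_2 s_3)^{1/4} \ge |Z(G)|^{1/2}$. Plugging this into the inequality of \Cref{thm:normbarrier} yields
\[
|H_1|\,|H_2|\,|H_3| \;\le\; \frac{|G|^{3/2}}{(s_1 s_2 s_3)^{1/4}} \;\le\; \frac{|G|^{3/2}}{|Z(G)|^{1/2}},
\]
as required. I do not anticipate a serious obstacle here; the only mildly subtle step is the injectivity argument for the central multiplication map, which requires both the commutativity of $Z(G)$ (to rearrange) and the triple product property (to conclude triviality of each factor).
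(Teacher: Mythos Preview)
Your argument is correct and is essentially the same as the paper's: both use $Z(G)\subseteq N(H_i)$ to get $s_i \ge |Z(G)|/|Z(G)\cap H_i|$, then bound $\prod_i |Z(G)\cap H_i| \le |Z(G)|$ via the triple product property restricted to the abelian group $Z(G)$, and finish by invoking \Cref{thm:normbarrier}. The only cosmetic difference is that the paper cites \cite[Lemma~3.1]{cu2003} for the abelian bound whereas you spell out the injectivity of the central multiplication map directly.
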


\begin{proof}
Because $H_1 \cap Z(G)$, $H_2 \cap Z(G)$, and $H_3 \cap Z(G)$ satisfy the triple product property
in the abelian group $Z(G)$,
\[
|Z(G)| \ge |H_1 \cap Z(G)| \,|H_2 \cap Z(G)|\, |H_3 \cap Z(G)|
\]
by \cite[Lemma~3.1]{cu2003}. Combining this inequality with $Z(G) \subseteq N(H_i)$ shows that
\begin{align*}
|Z(G)| &\ge |H_1 \cap Z(G)|\, |H_2 \cap Z(G)| \,|H_3 \cap Z(G)|\\
&= |H_1|\,|H_2|\,|H_3|\,|Z(G)|^3/(|H_1 Z(G)|\,|H_2 Z(G)|\,|H_3 Z(G)|)\\
&\ge |H_1|\,|H_2|\,|H_3|\,|Z(G)|^3/(|H_1 N(H_1)|\,|H_2 N(H_2)|\,|H_3 N(H_3)|)\\
&=|H_1|\,|H_2|\,|H_3|\,|Z(G)|^3/(|N(H_1)|\,|N(H_2)|\,|N(H_3)|),
\end{align*}
and therefore $|N(H_1)|\,|N(H_2)|\,|N(H_3)|/(|H_1|\,|H_2|\,|H_3| ) \ge |Z(G)|^2$. The conclusion now follows by \Cref{thm:normbarrier}.
\end{proof}

\section{Constructions in Lie groups}\label{sec:lieexp}

In this section, we study triple product property constructions in Lie groups (i.e., groups that are also smooth manifolds). All Lie groups will be assumed to be positive-dimensional. We define the Lie exponent of a Lie group $G$ in terms of the rank $r(G)$, which we take to be the real dimension of a Cartan subalgebra of the Lie algebra.\footnote{Note that this differs from the usual convention for complex Lie groups of using the complex dimension. For example, this is why \Cref{table:dimensions} shows that $r(\GL(n,\C))=2n$.}

\begin{definition}\label{def:lieexponent}
The \emph{Lie exponent} $\omega(G)$ of a Lie group $G$ of rank $r(G)$ is the infimum of the quantity
\[\frac{r(G)}{(\dim M_1 + \dim M_2 + \dim M_3)/3 - (\dim G -r(G))/2}\]
over all submanifolds $M_1$, $M_2$, and $M_3$ of $G$ satisfying the triple product property and $(\dim M_1 + \dim M_2 + \dim M_3)/3 > (\dim G -r(G))/2$.
(Recall that the infimum of the empty set is $+\infty$.)
The Lie exponent of a family of groups is the infimum of $\omega(G)$ over $G$ in the family.
\end{definition}

We primarily have in mind semisimple Lie groups, or more generally reductive groups, and it is unclear how relevant the Lie exponent is for other groups. Note that if $G$ is abelian, then $r(G) = \dim_{\mathbb{R}} G$.

\Cref{def:lieexponent} is motivated by the following analogy with the finite field setting. The finite groups of Lie type fall into families of Chevalley groups defined over $\F_q$ as $q$ varies, with the families corresponding to the classification of simple Lie groups (as well as some complications such as twisting). For example, $\SL(n,q)$ is analogous to $\SL(n,\R)$ or $\SL(n,\C)$. Suppose we have triple product property constructions with subsets of sizes $q^{m_1+o(1)}$, $q^{m_2+o(1)}$, and $q^{m_3+o(1)}$ in such a family of simple groups $G_q$ as $q \to \infty$. This is a finite analogue of having submanifolds of dimensions $m_1$, $m_2$, and $m_3$. It follows from \cite[Theorem~1.3]{largestirrep} that the largest irreducible representation of $G_q$ has dimension $q^{(d-r)/2+o(1)}$ as $q \to \infty$, where $d$ and $r$ are the dimension and rank of the corresponding Lie group.\footnote{To deduce this result from \cite[Theorem~1.3]{largestirrep}, note that the Steinberg representation has dimension $q^{(d-r)/2}$. See also \cite[Theorems~5.1--5.3]{largestirrep} for some classical groups that are not quite simple.} 
If the dimensions of the irreducible representations of $G_q$ are $d_1,\dots,d_k$, then
by \Cref{thm:omegabound},
\begin{align*}
q^{(m_1+m_2+m_3 + o(1))\omega/3} &\le \sum_i d_i^\omega\\ 
& \le \sum_i d_i^2 \max_j d_j^{\omega-2} \\
& = |G| \max_j d_j^{\omega-2}\\
& = q^{d + (\omega-2)(d-r)/2 + o(1)},
\end{align*}
and taking the limit as $q \to \infty$ shows that
\[
\omega \le \frac{r}{(m_1+m_2+m_3)/3 - (d-r)/2}
\]
if the denominator is positive.
In other words, \Cref{def:lieexponent} is exactly the bound on $\omega$ one would get if the construction had an analogue in the corresponding finite groups of Lie type. We note that we know of no general reason why such an analogue should exist; indeed, we do not know of any finite analogues of the Lie group constructions given later in this section. We pose the following question:

\begin{question}
Is it true that for every Lie group $G$, the exponent of matrix multiplication is at most $\omega(G)$?
\end{question}

By \Cref{prop:lieexplower} below, the answer must be yes if $\omega=2$. A direct proof would be of considerable interest. Even without such a proof, we view $\omega(G)$ as a model for what groups can do in the continuous setting, which allows for geometric constructions that may not work over finite fields, but that we might hope give inspiration for related constructions in the finite setting.

\begin{proposition}\label{prop:lieexplower}
Every Lie group $G$ has $\omega(G) > 2$.
\end{proposition}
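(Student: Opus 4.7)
I aim to show that for any submanifolds $M_1, M_2, M_3$ satisfying the triple product property in $G$, we have $\dim M_1 + \dim M_2 + \dim M_3 < 3\dim G/2$ strictly, with a quantitative gap from which $\omega(G) > 2$ follows via \Cref{def:lieexponent} (a direct check shows that $\omega(G) = 2$ would require $\dim M_1 + \dim M_2 + \dim M_3 = 3\dim G/2$ exactly).

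The first step is to extract pairwise injectivity from the TPP: if $m_i m_j = m_i' m_j'$ with $m_i, m_i' \in M_i$ and $m_j, m_j' \in M_j$, then $g \coloneqq m_i'^{-1} m_i = m_j' m_j^{-1}$ lies in $Q(M_i) \cap Q(M_j)$, and applying the TPP to the triple $(g, g^{-1}, e)$ (using $e \in Q(M_k)$ for the remaining index $k$) forces $g = e$. Since a continuous injection between topological manifolds cannot raise dimension, $\dim M_i + \dim M_j \le \dim G$ for every pair, and summing gives the packing bound $\sum_i \dim M_i \le 3\dim G/2$. If $\dim G$ is odd, $3\dim G/2 \notin \Z$ already yields the sharpened bound $\sum_i \dim M_i \le (3\dim G - 1)/2$ and hence the quantitative estimate $\omega(G) \ge 6 r(G)/(3 r(G) - 1) > 2$.

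The substantive case is $\dim G$ even. Assume for contradiction that $\sum_i \dim M_i = 3\dim G/2$; then each pairwise bound must be an equality, forcing $\dim M_i = \dim G/2$ for every $i$. Using that $Q(Mg) = Q(M)$, I right-translate each $M_i$ independently so that $e \in M_i$, which preserves the TPP. Then $\mu_{12} \colon M_1 \times M_2 \to G$ is a continuous injection between smooth manifolds of the same dimension $\dim G$, so by Brouwer's invariance of domain $M_1 M_2$ is open in $G$; since $e = e \cdot e \in M_1 M_2$, it is an open neighborhood of $e$. Because $\dim M_3 \ge 1$ and $e \in M_3$, I can choose $m_3 \in M_3$ with $m_3 \ne e$ but $m_3^{-1} \in M_1 M_2$. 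Writing $m_3^{-1} = m_1 m_2$ gives $m_1 m_2 m_3 = e$ with each $m_i \in M_i \subseteq Q(M_i)$ (using $e \in M_i$), so the TPP forces $m_1 = m_2 = m_3 = e$, contradicting $m_3 \ne e$.

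The main pitfall to avoid is the tempting but incorrect shortcut of linearizing the TPP at the identity to deduce the stronger bound $\sum_i \dim M_i \le \dim G$ from an infinitesimal direct-sum condition on tangent spaces: higher-order Baker--Campbell--Hausdorff terms can cancel first-order collisions, so this stronger infinitesimal bound need not hold in general (and indeed the argument must accommodate examples saturating the weaker $3\dim G/2$ bound). My argument sidesteps this by relying only on pairwise injectivity at the dimension level and then pushing the work to a purely topological contradiction in the saturated boundary case, where Brouwer's invariance of domain is the essential input and the right-translation invariance of $Q$ is what lets me place all three $M_i$ through $e$ at once.
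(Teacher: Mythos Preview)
Your approach is essentially the paper's---pairwise dimension bounds, then invariance of domain in the equality case---but there is a slip in the injectivity step that breaks the argument as written. You assert that $g = m_i'^{-1} m_i \in Q(M_i)$, but the paper's $Q(X)=\{xx'^{-1}:x,x'\in X\}$ consists of \emph{right} quotients, whereas $m_i'^{-1}m_i$ is a \emph{left} quotient; these sets differ in nonabelian groups. In fact the map $(m_1,m_2)\mapsto m_1m_2$ need not be injective under the TPP: in any group containing a free subgroup on generators $a,b$, the triple $S=\{e,a,b\}$, $T=\{e,a^{-1}b\}$, $U=\{e\}$ satisfies the TPP (one checks directly that $Q(S)\cap Q(T)=\{e\}$), yet $a\cdot(a^{-1}b)=b=b\cdot e$. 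Since your argument for injectivity is purely group-theoretic and makes no use of the manifold structure, this shows the reasoning is invalid, and hence the invariance-of-domain conclusion in your step~6 is unjustified.

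The repair is immediate and is exactly what the paper does: use $(m_1,m_2)\mapsto m_1^{-1}m_2$ instead. Then $m_1^{-1}m_2=m_1'^{-1}m_2'$ gives $m_1'm_1^{-1}=m_2'm_2^{-1}$, both honest elements of $Q(M_1)$ and $Q(M_2)$, and the TPP (with third factor $e$) forces $m_1=m_1'$, $m_2=m_2'$. After your translation so that $e\in M_i$, the image $M_1^{-1}M_2$ is an open neighborhood of $e$ by invariance of domain; since $e\in M_1$ and $e\in M_2$ give $m_1^{-1}=e\cdot m_1^{-1}\in Q(M_1)$ and $m_2=m_2\cdot e^{-1}\in Q(M_2)$, choosing $m_3\in M_3\setminus\{e\}$ with $m_3^{-1}\in M_1^{-1}M_2$ produces $m_1^{-1}m_2m_3=e$ with the three factors in $Q(M_1),Q(M_2),Q(M_3)$, contradicting the TPP. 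With this single change your proof is correct and matches the paper's (your odd/even split is harmless but unnecessary, since equality already forces $\dim M_i=(\dim G)/2$, which is vacuous when $\dim G$ is odd).
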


\begin{proof}
If $M_1$, $M_2$, and $M_3$ satisfy the triple product property in $G$, then the map $(m_1,m_2) \mapsto m_1^{-1}m_2$ from $M_1 \times M_2$ to $G$ is injective, and so $\dim M_1 + \dim M_2 \le \dim G$. Similarly, $\dim M_2 + \dim M_3 \le \dim G$ and $\dim M_1 + \dim M_3 \le \dim G$. Averaging these inequalities shows that $(\dim M_1 + \dim M_2 + \dim M_3)/3 \le (\dim G)/2$, and therefore the bound we obtain for $\omega(G)$ is at least 
\[\frac{r(G)}{(\dim G)/2 - (\dim G -r(G))/2} = 2.\]
Equality could hold only if $\dim M_i = (\dim G)/2$ for all $i$. In that case, every continuous, injective function from $M_1 \times M_2$ to $G$ must be open by invariance of domain, and therefore has an open image. In particular, for $m_1' \in M_1$ and $m_2' \in M_2$ consider the map sending $(m_1,m_2) \in M_1 \times M_2$ to $m_1' m_1^{-1} m_2 m_2'^{-1}$. Its image contains a neighborhood of $1$, but by the triple product property it intersects the quotient set $Q(M_3) = M_3 M_3^{-1}$ only at $1$, which is impossible since $Q(M_3)$ contains a submanifold $M_3 m_3^{-1}$ (for fixed $m_3 \in M_3$) of dimension $(\dim G)/2$ that contains $1$.
\end{proof}

Note that in the notation of \Cref{def:lieexponent}, if $\dim M_1 + \dim M_2 + \dim M_3 \le \dim G$, then they cannot prove any better upper bound for $\omega(G)$ than $3$. This conclusion is immediate if $r(G) = \dim G$, and one can check that it follows from $r(G) \le \dim G$.
In fact, the best upper bound we know on the Lie exponent is $3$, which holds for abelian groups:

\begin{proposition}\label{prop:triviallieexp}
If $G$ is abelian, then $\omega(G) = 3$.
\end{proposition}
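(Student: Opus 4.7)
The plan is to exploit the fact that when $G$ is abelian the rank equals the dimension, $r(G) = \dim G$, so the correction term $(\dim G - r(G))/2$ in \Cref{def:lieexponent} vanishes and the expression to be minimized reduces to $\dim G \big/ \bigl((\dim M_1 + \dim M_2 + \dim M_3)/3\bigr)$. Proving $\omega(G) = 3$ therefore amounts to showing that $\dim M_1 + \dim M_2 + \dim M_3 \le \dim G$ for every TPP triple, and that equality can be attained by some triple of positive total dimension (which is allowed because $\dim G > 0$).

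For the lower bound $\omega(G) \ge 3$, I would show that in the abelian case the multiplication map $\mu \colon M_1 \times M_2 \times M_3 \to G$ sending $(m_1, m_2, m_3) \mapsto m_1 m_2 m_3$ is injective. Indeed, if $\mu(m_1, m_2, m_3) = \mu(m_1', m_2', m_3')$, commutativity rearranges the equation to $(m_1 m_1'^{-1})(m_2 m_2'^{-1})(m_3 m_3'^{-1}) = 1$, whose three factors lie in $Q(M_1)$, $Q(M_2)$, and $Q(M_3)$ respectively, so the triple product property forces each factor to equal $1$ and hence $m_i = m_i'$. A continuous injection between manifolds requires the source dimension to be at most the target dimension (invariance of domain), yielding $\dim M_1 + \dim M_2 + \dim M_3 \le \dim G$. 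Note that this is a genuine strengthening of the pairwise bound used in the proof of \Cref{prop:lieexplower}, and it crucially uses abelianness.

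For the upper bound $\omega(G) \le 3$, I would exhibit a TPP triple attaining total dimension $\dim G$. Pick any vector-space direct-sum decomposition $\mathfrak{g} = V_1 \oplus V_2 \oplus V_3$ of the Lie algebra, with at least one summand nontrivial. Choose a symmetric open neighborhood $W$ of $0 \in \mathfrak{g}$ on which $\exp$ is a diffeomorphism onto its image, and set $B_i \coloneqq W \cap V_i$ and $M_i \coloneqq \exp(B_i)$, so that each $M_i$ is an embedded submanifold of $G$ of dimension $\dim V_i$. Because $G$ is abelian, $\exp$ is a homomorphism, so $Q(M_i) = \exp(B_i - B_i)$; after shrinking $W$ enough to guarantee $(B_1 - B_1) + (B_2 - B_2) + (B_3 - B_3) \subseteq W$, any relation $\exp(v_1)\exp(v_2)\exp(v_3) = \exp(v_1 + v_2 + v_3) = 1$ with $v_i \in B_i - B_i$ forces $v_1 + v_2 + v_3 = 0$ by the injectivity of $\exp$ on $W$, and then $v_i = 0$ individually by the direct-sum property, establishing the TPP. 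The resulting ratio is exactly $3$, and the only mild subtlety is the shrinking step for $W$, which is a routine application of the inverse function theorem at $0 \in \mathfrak{g}$.
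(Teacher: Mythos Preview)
Your proof is correct. The lower bound argument ($\omega(G) \ge 3$) is essentially identical to the paper's: both observe that abelianness makes the triple product map $M_1 \times M_2 \times M_3 \to G$ injective, and then compare dimensions.

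For the upper bound ($\omega(G) \le 3$) you take a genuinely different route. The paper simply sets $H_1 = G$ and $H_2 = H_3 = \{1\}$; since Lie groups in this paper are positive-dimensional, the total dimension is $\dim G > 0$, the triple product property is automatic, and the ratio is exactly $3$. Your construction via a direct-sum decomposition of $\mathfrak{g}$ and small exponential patches also works, and it has the mild advantage of producing triples where all three pieces can be nontrivial, but it introduces bookkeeping (the shrinking of $W$ so that the relevant sums of difference sets remain in the injectivity neighborhood of $\exp$) that is entirely avoidable here. In short: same lower bound, and a correct but needlessly elaborate upper bound compared with the paper's one-line witness.
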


\begin{proof}
Let $H_1 = G$ and $H_2 = H_3 = \{1\}$. Then $H_1$, $H_2$, and $H_3$ satisfy the triple product property in $G$. Since $r(G) = \dim G$ and $\dim H_1 + \dim H_2 + \dim H_3 = \dim G$, it follows that $\omega(G) \le 3$.

For the other direction, note that if $G$ is abelian, then the product map $M_1 \times M_2 \times M_3 \to G$ must be injective or else the triple product property fails. If the map is injective, then $\dim M_1 + \dim M_2 + \dim M_3 \le \dim G$ and hence $\omega(G) \ge 3$.
\end{proof}

There is also a Lie analogue of the packing bound from \Cref{def:packingbound}.

\begin{definition}\label{def:liepackingbound}
We say a sequence $G_1,G_2,\dots$ of Lie groups \emph{meets the packing bound} if there exist submanifolds $M_{1,i}$, $M_{2,i}$, and $M_{3,i}$ of $G_i$ satisfying the triple product property such that
\[\lim_{i \to \infty} \frac{\dim G_i}{(\dim M_{1,i} + \dim M_{2,i} + \dim M_{2,i})/3} = 2.\]
\end{definition}

\begin{proposition}
If Lie groups $G_1,G_2,\dots$ have $\lim_{i \to \infty} \omega(G_i)=2$, then they 
achieve the packing bound.
\end{proposition}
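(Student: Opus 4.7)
My plan is a direct calculation: extract near-optimal triple product property submanifolds from the definition of $\omega(G_i)$, invoke the universal upper bound $(\dim M_1 + \dim M_2 + \dim M_3)/3 \le (\dim G)/2$ that was established inside the proof of \Cref{prop:lieexplower}, and show that these two bounds pinch $\dim G_i / \bigl((\dim M_{1,i}+\dim M_{2,i}+\dim M_{3,i})/3\bigr)$ to $2$.

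In more detail, write $d_i = \dim G_i$, $r_i = r(G_i)$, and $\mu_i = (\dim M_{1,i}+\dim M_{2,i}+\dim M_{3,i})/3$. Since $\omega(G_i)\to 2$, for every $i$ large enough the infimum in \Cref{def:lieexponent} is finite, so I can choose TPP submanifolds $M_{1,i},M_{2,i},M_{3,i}$ with $\mu_i - (d_i-r_i)/2 > 0$ and
\[
\frac{r_i}{\mu_i - (d_i-r_i)/2} \le 2+\varepsilon_i,
\]
where $\varepsilon_i\to 0$ (take $\varepsilon_i = \omega(G_i)-2+1/i$, say). Solving for $\mu_i$ yields
\[
\mu_i \;\ge\; \frac{d_i-r_i}{2} + \frac{r_i}{2+\varepsilon_i} \;=\; \frac{d_i}{2} - \frac{r_i\,\varepsilon_i}{2(2+\varepsilon_i)}.
\]

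Now I apply the observation from the proof of \Cref{prop:lieexplower}: the triple product property makes each of the multiplication maps $M_{j,i}\times M_{k,i}\to G_i$ (for $j\ne k$) injective, and averaging the resulting pairwise dimension inequalities gives the unconditional bound $\mu_i \le d_i/2$. Combining this with the preceding lower bound, and using $r_i\le d_i$, I get
\[
2 \;\le\; \frac{d_i}{\mu_i} \;\le\; \frac{d_i}{\tfrac{d_i}{2} - \tfrac{r_i\varepsilon_i}{2(2+\varepsilon_i)}} \;\le\; \frac{2(2+\varepsilon_i)}{2} \;=\; 2+\varepsilon_i,
\]
whence $d_i/\mu_i\to 2$, which is exactly \Cref{def:liepackingbound}.

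I do not expect any real obstacle. The two points that warrant attention are (i) making sure the near-infimum choice in \Cref{def:lieexponent} really is available, which it is as soon as $\omega(G_i)<\infty$, i.e.\ for all sufficiently large $i$; and (ii) handling the ratio $r_i/d_i$, which is $\le 1$ and hence only makes the slack term $r_i\varepsilon_i/(2(2+\varepsilon_i))$ smaller relative to $d_i$ when $r_i/d_i$ is small. So the algebra goes through uniformly, with no case analysis needed.
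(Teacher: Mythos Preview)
Your argument is correct and follows essentially the same route as the paper: both proofs extract the key inequality $\mu_i \le d_i/2$ from the proof of \Cref{prop:lieexplower} and combine it with $r_i \le d_i$ to squeeze $d_i/\mu_i$ down to~$2$. The paper packages this as the single pointwise inequality $\dfrac{r}{\mu-(d-r)/2}\ge\dfrac{d}{\mu}$ (valid for every admissible TPP triple), whereas you unwind the same algebra through an explicit near-infimum choice; the content is identical.
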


\begin{proof}
It suffices to show that for $M_1$, $M_2$, and $M_3$ satisfying the triple product property in a group $G$ with $(\dim M_1 + \dim M_2 + \dim M_3)/3 > (\dim G -r(G))/2$,
\[\frac{r(G)}{(\dim M_1 + \dim M_2 + \dim M_3)/3 - (\dim G -r(G))/2} \ge \frac{\dim G }{(\dim M_{1} + \dim M_{2} + \dim M_{3} )/3}.\]
This assertion follows from the inequality $(\dim M_{1} + \dim M_{2} + \dim M_{3})/3 \le (\dim G)/2$ used in the proof of \Cref{prop:lieexplower}.
\end{proof}

It was shown in \cite[Theorem~6.1]{cu2003} that the Lie groups $\SL(n,\R)$ meet the packing bound, by taking $M_1$, $M_2$, and $M_3$ to be the groups of upper unitriangular, lower unitriangular, and orthogonal matrices. In this construction, the group is an algebraic group over $\R$, as are the subgroups $M_i$. In particular, they are all linear algebraic groups over $\R$, i.e., subgroups of $\GL(n,\R)$ defined by polynomial equations. Algebraic groups are a little more general than linear algebraic groups; they are to algebraic varieties as Lie groups are to manifolds.

However, algebraic varieties over $\C$ (or any algebraically closed field) cannot help:

\begin{theorem} \label{thm:alggroup}
Let $G$ be an algebraic group over an algebraically closed field, and let $V_1$, $V_2$, and $V_3$ be subvarieties of $G$ that satisfy the triple product property. Then
\[
\dim V_1 + \dim V_2 + \dim V_3 \le \dim G.
\]
\end{theorem}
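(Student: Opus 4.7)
The plan is to apply the fiber-dimension theorem to the morphism
\[
\phi\colon V_1\times V_1\times V_2\times V_2\times V_3\times V_3\to G, \qquad \phi(a,b,c,d,e,f)=ab^{-1}cd^{-1}ef^{-1},
\]
which is regular because multiplication and inversion in $G$ are. First I would reduce to the case in which each $V_i$ is irreducible by replacing $V_i$ with an irreducible component of maximal dimension; this preserves the triple product property (since $Q(W)\subseteq Q(V)$ whenever $W\subseteq V$) and does not change $\dim V_i$. The domain of $\phi$ is then irreducible of dimension $2(\dim V_1+\dim V_2+\dim V_3)$, and setting $Y=\overline{\phi(\textrm{domain})}$ makes $\phi$ dominant onto the irreducible subvariety $Y\subseteq G$.

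The crux is to identify $\phi^{-1}(1)$. If $\phi(a,b,c,d,e,f)=1$, then $s=ab^{-1}\in Q(V_1)$, $t=cd^{-1}\in Q(V_2)$, and $u=ef^{-1}\in Q(V_3)$ satisfy $stu=1$, so the triple product property forces $s=t=u=1$, i.e.\ $a=b$, $c=d$, and $e=f$. Hence $\phi^{-1}(1)$ is the image of the diagonal embedding of $V_1\times V_2\times V_3$ into the domain and has dimension exactly $\dim V_1+\dim V_2+\dim V_3$. Applying the fiber-dimension inequality at the identity (which lies in the image since $\phi(a,a,c,c,e,e)=1$) gives
\[
\dim V_1+\dim V_2+\dim V_3 \;=\; \dim\phi^{-1}(1) \;\ge\; 2(\dim V_1+\dim V_2+\dim V_3)-\dim Y,
\]
which rearranges to $\dim Y\ge \dim V_1+\dim V_2+\dim V_3$. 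Since $Y\subseteq G$, the theorem follows.

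The only nontrivial ingredient is the fiber-dimension inequality $\dim f^{-1}(y)\ge \dim X-\dim Y$ at every $y$ in the image of a dominant morphism of irreducible varieties over an algebraically closed field, which is standard. The actual content of the argument is the choice of $\phi$: one needs six copies of the $V_i$ (three inverted) so that the triple product property pins down $\phi^{-1}(1)$ on the nose, rather than merely bounding it from one side. This is precisely what has no analogue in the smooth-manifold setting of \Cref{prop:lieexplower}, where a specific fiber of a smooth map can be much larger than a generic fiber; that is why only the weaker bound $(\dim M_1+\dim M_2+\dim M_3)/3\le (\dim G)/2$ survives there, while over an algebraically closed field we get the sharper $\dim V_1+\dim V_2+\dim V_3\le \dim G$.
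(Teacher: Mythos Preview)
Your proof is correct and rests on the same idea as the paper's: apply the fiber dimension theorem to a multiplication map whose fiber over $1$ is controlled by the triple product property. The paper's execution is a little leaner, though. Instead of six copies, it fixes base points $v_i'\in V_i$ and uses
\[
\varphi\colon V_1\times V_2\times V_3\to G,\qquad \varphi(v_1,v_2,v_3)=v_1v_1'^{-1}\,v_2v_2'^{-1}\,v_3v_3'^{-1},
\]
whose fiber over $1$ is the single point $(v_1',v_2',v_3')$, hence zero-dimensional; the fiber inequality $0\ge \dim V_1+\dim V_2+\dim V_3-\dim G$ then gives the result immediately. So your remark that ``one needs six copies of the $V_i$'' is not quite right: three suffice once you normalize by base points, and this avoids the reduction to irreducible components and the passage through the closure $Y$ of the image. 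That said, your six-fold version is perfectly valid and the arithmetic lands in the same place.
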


As a consequence, subvarieties of algebraic groups over $\C$ cannot be used to meet the packing bound or obtain a better Lie exponent than~$3$.

\begin{proof}
Let $v_i'$ be any element of $V_i$, and define $\varphi\colon V_1 \times V_2 \times V_3 \to G$ by
\[
\varphi(v_1,v_2,v_3) = v_1 v_1'^{-1} v_2 v_2'^{-1} v_3 v_3'^{-1}.
\]
By the triple product property, the only solution of $\varphi(v_1,v_2,v_3) = 1$ is $(v_1',v_2',v_3')$, and so the solution set is zero-dimensional since we are working over an algebraically closed field. However, the fiber dimension theorem \cite[Theorem 17.24]{harris2013algebraic} says the dimension of the solution set must be at least $\dim V_1 + \dim V_2 + \dim V_3 - \dim G$, which yields the desired inequality.
\end{proof}

The intuitive difference between $\R$ and $\C$ here is that a variety can have fewer points over $\R$ than one might expect by counting degrees of freedom. For example, the equation $x_1^2 + \dots + x_n^2 = 0$ defines an $(n-1)$-dimensional variety, which has plenty of points over $\C$, but over $\R$ it consists of just a single point. Fields that are not algebraically closed may lead to an anomalously low number of solutions, and we cannot conclude that a variety is zero-dimensional just because it has only one real point. What Theorem~\ref{thm:alggroup} indicates is that to obtain strong examples, we must either use constructions that are not defined by polynomial equations, or choose equations that have fewer solutions over $\R$ than they do over $\C$. (Note that there are many possibilities that are not defined by polynomial equations. For example, constructions that use complex conjugation generally do not define complex subvarieties.)

Theorem~\ref{thm:alggroup} does rule out one superficially attractive possibility, namely obtaining Lie exponent~$2$ via subvarieties of algebraic groups over $\overline{\F}_q$ and then transitioning to finite groups by using finite subfields of $\overline{\F}_q$ with sizes tending to infinity.

We now give several new constructions over $\R$ with parameters that improve upon the previously known construction from \cite{cu2003}. Our constructions make use of the following observation, which relaxes the triple product property for subgroups.

\begin{lemma}\label{lem:htpp}
Suppose that $H_1$, $H_2$, and $H_3$ are Lie subgroups of a Lie group $G$ and $K$ is a compact subgroup of $G$ such that the equation $h_1h_2h_3 = 1$ with $h_i \in H_i$ implies that $h_1,h_2,h_3 \in K$. Then the Lie exponent of $G$ is at most
\[\frac{r(G)}{(\dim H_1 + \dim H_2 + \dim H_3 -2 \dim K)/3 -(\dim G- r(G))/2}.\]
\end{lemma}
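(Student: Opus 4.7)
The plan is to build genuine triple product property submanifolds $M_1, M_2, M_3$ from the relaxed data $(H_1, H_2, H_3, K)$, arrange that their dimensions sum to at least $\dim H_1 + \dim H_2 + \dim H_3 - 2\dim K$, and then plug into \Cref{def:lieexponent}. The natural asymmetric choice is to take $M_3 = H_3$ in full, and to cut $M_1 \subseteq H_1$ and $M_2 \subseteq H_2$ down to small transversal slices to $K$ through the identity.

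Concretely, set $K_i = K \cap H_i$, which is closed in $H_i$ since $K$ is compact (hence closed) in $G$. Fix a vector-space complement $V_i$ to the Lie algebra $\mathfrak{k}_i$ of $K_i$ inside $\mathfrak{h}_i$, and set $M_i = \exp(U_i)$ for a sufficiently small open neighborhood $U_i$ of $0$ in $V_i$ (for $i=1,2$). Because $K_i$ is closed the quotient $H_i/K_i$ is a smooth manifold, and transversality makes the derivative of the composition $M_i \hookrightarrow H_i \to H_i/K_i$ at the identity an isomorphism $V_i \xrightarrow{\sim} \mathfrak{h}_i/\mathfrak{k}_i$. For $U_i$ small enough this composition becomes a diffeomorphism onto its image, so $\dim M_i = \dim H_i - \dim K_i$ and $M_i$ embeds into $H_i/K_i$.

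The key geometric consequence is $Q(M_i) \cap K = \{1\}$ for $i = 1, 2$: any $xy^{-1} \in K$ with $x,y \in M_i \subseteq H_i$ already lies in $K_i$, so $x$ and $y$ represent the same $K_i$-coset of $H_i$, forcing $x = y$ by the injectivity above. From this plus the hypothesis the triple product property is immediate: if $q_1 q_2 q_3 = 1$ with $q_i \in Q(M_i) \subseteq H_i$, then each $q_i$ lies in $K$, the observation forces $q_1 = q_2 = 1$, and then $q_3 = 1$.

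Finally the dimension count gives
\[
\dim M_1 + \dim M_2 + \dim M_3 \;=\; \dim H_1 + \dim H_2 + \dim H_3 - \dim K_1 - \dim K_2 \;\geq\; \dim H_1 + \dim H_2 + \dim H_3 - 2\dim K,
\]
using $\dim K_i \le \dim K$. Substituting into \Cref{def:lieexponent} yields the claimed bound, since the denominator there is monotonically non-decreasing in $\dim M_1 + \dim M_2 + \dim M_3$, so replacing that sum with a smaller lower bound only weakens the resulting upper estimate on $\omega(G)$. The only mildly technical step is arranging $M_i \to H_i/K_i$ to be a genuine embedding for $U_i$ sufficiently small; this is a routine transversality and inverse function theorem argument, and is the main place where one uses that $K$ (hence $K_i$) is a well-behaved closed subgroup.
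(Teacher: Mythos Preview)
Your proof is correct and follows essentially the same approach as the paper: keep one of the three subgroups intact and replace the other two by small slices transverse to $K$, then verify the triple product property from $Q(M_i)\cap K=\{1\}$ and read off the dimension count. The paper phrases the slice construction via the slice theorem and keeps $H_1$ rather than $H_3$, while you build the slices by hand with the exponential map and track $K_i=K\cap H_i$ explicitly, but these are cosmetic differences.
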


We will refer to this situation as the \emph{$K$-triple product property}. More generally, the same holds for submanifolds $M_i$ such that $q_1q_2q_3 = 1$ with $q_i \in Q(M_i)$ implies $q_1,q_2,q_3 \in K$, but we will need it only for subgroups.

\begin{proof}
By the slice theorem \cite[Theorem~I.2.1]{Audin}, there exist submanifolds $H_i'$ of $H_i$ such that $\dim H_i' = \dim H_i - \dim K$ and no two elements $h,h' \in H_i'$ satisfy $h h'^{-1} \in K$ unless $h=h'$. Then the submanifolds $H_1$, $H_2'$, and $H_3'$ of $G$ satisfy the triple property property and yield the asserted bound. (Note that the first submanifold is $H_1$, not $H_1'$.)
\end{proof}

\subsection{Asymptotic Lie exponent 3}

As mentioned above and shown in \cite[Theorem~6.1]{cu2003}, the upper unitriangular, lower unitriangular, and special orthogonal groups satisfy the triple product property in $\SL(n,\R)$. Since these subgroups have dimension $n(n-1)/2$ inside a group of dimension $n^2-1$, the groups $\SL(n,\R)$ meet the packing bound.

However, the rank of $\SL(n,\R)$ is $n-1$, and so this example does not yield any Lie exponent bound (the denominator in \Cref{def:lieexponent} would vanish). In this section we modify the construction to get a bound on the Lie exponent approaching $3$ for powers of $\SL(n,\R)$.

\begin{theorem}\label{thm:slnpow}
For $m>1$ and $n>1$, the Lie exponent of $\SL(n,\R)^m$ is at most 
\[\frac{3m(n-1)}{m(n-1)-n}.\]
\end{theorem}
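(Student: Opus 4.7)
Plan: The starting point is the baseline construction of \cite[Theorem~6.1]{cu2003}: in $\SL(n,\R)$, the subgroups $N$ (upper unitriangular), $N^-$ (lower unitriangular), and $K = \SO(n)$ satisfy the triple product property, each of dimension $n(n-1)/2$. Their $m$-fold products $M_1 = N^m$, $M_2 = (N^-)^m$, $M_3 = K^m$ automatically inherit the TPP in $G = \SL(n,\R)^m$, with total dimension $3mn(n-1)/2$. Since $\dim G = m(n^2-1)$ and $r(G) = m(n-1)$, we have $(\dim G - r(G))/2 = mn(n-1)/2$, so this baseline exactly meets the vanishing-denominator boundary in \Cref{def:lieexponent} and yields only an infinite Lie exponent bound. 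The target bound $\frac{3m(n-1)}{m(n-1)-n}$ corresponds to a construction of total effective dimension $3mn(n-1)/2 + m(n-1) - n$, so the task is to add $m(n-1) - n$ further dimensions.

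The enlargement is achieved using pieces of the Cartan torus $A^m \subset G$, where $A$ denotes the positive diagonal matrices in $\SL(n,\R)$, together with \Cref{lem:htpp} to absorb compact overlaps. Since $A$ normalizes both $N$ and $N^-$, the sets $H_1 = N^m \cdot A_1$ and $H_2 = (N^-)^m \cdot A_2$ are genuine subgroups of $G$ for any subtori $A_1, A_2 \subseteq A^m$. Take $H_3 = K^m$ and a compact subgroup $K_0 \subseteq G$, and choose $A_1, A_2$ and $K_0$ so that $\dim H_1 + \dim H_2 + \dim H_3 - 2\dim K_0$ equals the required $3mn(n-1)/2 + m(n-1) - n$. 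Plugging into \Cref{lem:htpp} then yields the claimed Lie exponent bound
\[
\omega(G) \le \frac{r(G)}{(\dim H_1 + \dim H_2 + \dim H_3 - 2\dim K_0)/3 - (\dim G - r(G))/2} = \frac{3m(n-1)}{m(n-1) - n}.
\]

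The main obstacle is verifying the $K_0$-relaxed triple product property. Given $h_1 h_2 h_3 = 1$ with $h_1 = n_+ a_1$, $h_2 = a_2 n_-$, $h_3 = k$, one commutes the Cartan factors past the unipotent factors using the relations $a_i N a_i^{-1} = N$ and $a_i N^- a_i^{-1} = N^-$ to rewrite the equation as $n_+ n_-' = k^{-1}(a_1 a_2)^{-1}$ for some $n_-' \in (N^-)^m$. Coordinate-by-coordinate, this forces $n_+^{(i)} n_-^{(i)} \in K \cdot A$ in $\SL(n,\R)$. By the Iwasawa decomposition, the intersection $NN^- \cap KA$ is an $(n(n-1)/2)$-dimensional family per coordinate, so the subtorus $A_1 A_2 \subseteq A^m$ must be constrained in a way that couples the coordinate-wise $A$-components globally, and the compact $K_0$ must be calibrated to swallow whatever residual solutions remain after this coupling. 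The delicate step is thus not the dimension bookkeeping but rather the simultaneous choice of $A_1, A_2 \subseteq A^m$ of combined codimension $n$ (giving the $-n$ in the formula) and a compact $K_0$ of matching dimension, such that the coordinate solutions to $n_+^{(i)} n_-^{(i)} \in K a^{(i)}$ are pinned down into $K_0^3$ by the global Cartan constraint. Once this is carried out, the Lie exponent bound follows by direct substitution into \Cref{lem:htpp}.
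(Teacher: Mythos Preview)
Your plan is on the right track—enlarge the baseline $(N,N^-,\SO(n))$ triple in $\SL(n,\R)^m$ by adjoining part of the diagonal torus with a global coupling across the $m$ factors, then invoke \Cref{lem:htpp}—and you correctly diagnose that this coupling is ``the delicate step.'' But the proposal stops exactly where the proof must begin: you never specify $A_1$, $A_2$, or $K_0$, and the entire $K_0$-triple product property verification is deferred with ``once this is carried out.'' That verification is the whole content of the theorem; the dimension bookkeeping is, as you say, routine.

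The paper's construction is simpler than your two-sided split. It adjoins the torus to only \emph{one} factor, taking $H_1=\SO(n)^m$, $H_2=N^m$, and $H_3$ the group of lower-triangular tuples $(B^{(1)},\dots,B^{(m)})$ subject to $\prod_i B^{(i)}_{j,j}=1$ for each $j$; the compact group is the \emph{finite} group of $\pm1$ diagonal tuples, so $\dim K=0$ and nothing is lost in \Cref{lem:htpp}. The idea you are missing is the squeeze that drives the verification: from $M^{(i)}A^{(i)}=B^{(i)}$ one shows column by column that $M^{(i)}e_k$ is a unit vector whose $e_k$-component equals $B^{(i)}_{k,k}$, so $|B^{(i)}_{k,k}|\le1$ for every $i$; the product constraint $\prod_i B^{(i)}_{k,k}=1$ then forces each $B^{(i)}_{k,k}=\pm1$ and hence $M^{(i)}e_k=\pm e_k$. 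This interplay—orthogonality gives $|\cdot|\le1$, the cross-coordinate product forces equality—is precisely the ``global coupling'' you gesture at but do not supply. By contrast, your route through the coordinatewise intersection $NN^-\cap KA$ lands in a positive-dimensional solution set per factor with no mechanism given to collapse it; and since $N^mA_1$ and $(N^-)^mA_2$ are simply connected solvable groups with no nontrivial compact subgroups, any compact $K_0$ meets them only in the identity, so the $K_0$-relaxation buys you nothing on the $H_1$, $H_2$ side and you would in fact need the full triple product property there.
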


In the proof we will denote the $i,j$ entry of a matrix $M$ by $M_{i,j}$. To avoid ambiguity we will use superscripts to index sequences of matrices, with parentheses around the superscripts to distinguish them from exponents. Recall also that a unitriangular matrix is a triangular matrix with diagonal entries equal to~$1$.

\begin{proof}
Let $H_1 = \SO(n,\R)^m$, let 
\[
H_2 = \{ (A^{(1)}, \dots, A^{(m)}) \in \SL(n,\R)^m : \text{each $A^{(i)}$ is upper unitriangular}\},
\]
and let
\[
H_3 = \left\{ (B^{(1)}, \dots, B^{(m)}) \in \SL(n,\R)^m : \text{each $B^{(i)}$ is lower triangular and $\prod_{i=1}^m B^{(i)}_{j,j} = 1$ for all $j$}\right\},
\]
These subgroups have dimensions $mn(n-1)/2$, $mn(n-1)/2$, and $m(n(n+1)/2 - 1)-n$, respectively. We claim that they satisfy the $K$-triple product property in $\SL(n.\R)^m$, where 
\[
K = \{ (C^{(1)}, \dots, C^{(m)}) \in \SL(n,\R)^m : \text{each $C^{(i)}$ is diagonal with $\pm 1$ entries}\}.
\]
Since $\dim \SL(n,\R)^m = m(n^2-1)$ and the rank of $\SL(n,\R)^m$ is $m(n-1)$, while $\dim K = 0$, the claimed bound on the Lie exponent will follow by \Cref{lem:htpp}.

Let $M = (M^{(1)}, \dots, M^{(m)}) \in H_1$, $A = (A^{(1)}, \dots, A^{(m)}) \in H_2$, $B = (B^{(1)}, \dots, B^{(m)}) \in H_3$. We will show that if $MA=B$, then for each $i$, the matrix $M^{(i)}$ is diagonal with $\pm 1$ diagonal entries, in which case the same follows for $A^{(i)}$ and $B^{(i)}$. We will prove by induction on $j$ that $M^{(i)} e_j = \pm e_j$ for all $i$, where $e_1,\dots,e_n$ are the standard basis vectors.

Let the $j$th column of $A^{(i)}$ be $A^{(i)}_j$. For any $i$, $A^{(i)}_1 = e_1$, and so $M^{(i)}e_1 = B^{(i)}_1$. Since $M^{(i)}$ is orthogonal, $M^{(i)}e_1$ and thus also $B^{(i)}_1$ must be unit vectors. In particular,  $|B^{(i)}_{1,1}| \le 1$ for all $i$. But since $\prod_{i=1}^m B_{1,1}^{(i)} = 1$ this forces $B^{(i)}_{1,1} = \pm 1$, which proves the claim for $j=1$.

Now suppose $M^{(i)}e_j = \pm e_j$ for all $j < k$ and all $i$. 
For any $i$, $A^{(i)}_k = e_k + \sum_{j < k} A_{j,k}^{(i)}e_j$ and $B^{(i)}_k = B^{(i)}_{k,k} e_k + \sum_{j > k} B_{j,k}^{(i)}e_j$. From the induction hypothesis we deduce that 
\begin{align*}
M^{(i)}e_k
&= M^{(i)} \big(A^{(i)}_k - \sum_{j < k} A_{j,k}^{(i)}e_j\big)  \\
&= B^{(i)}_{k,k} e_k + \sum_{j > k} B_{j,k}^{(i)}e_j - \sum_{j < k} A_{j,k}^{(i)}(\pm e_j). 
\end{align*}
Since $M^{(i)}$ is orthogonal, $M^{(i)}e_k$ is a unit vector. Because $\prod_i B^{(i)}_{k,k} = 1$, it follows as above that $B^{(i)}_{k,k} = \pm 1$ for all $i$. Hence $M^{(i)} e_k = \pm e_k$ for all $i$, which proves the claim.
\end{proof}

\subsection{Conjugates of rotation groups}

\begin{theorem}\label{thm:3conj}
There are three conjugates of $\O(n,\R)$ inside of $\GL(n,\R)$ satisfying the $K$-triple product property, where $K$ is the subgroup of diagonal matrices with $\pm 1$ entries on the diagonal.
\end{theorem}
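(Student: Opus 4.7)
The plan is to construct three specific conjugates: $H_1 = \O(n,\R)$, $H_2 = D_2 \O(n,\R) D_2^{-1}$, and $H_3 = D_3 \O(n,\R) D_3^{-1}$, where $D_2, D_3$ are positive diagonal matrices with pairwise-distinct entries chosen in general position. Since $D_2, D_3$ are diagonal and hence commute with $K$, we have $K \subseteq H_i$ for each $i$, so every triple in $K^3$ whose product is $I$ is automatically a solution of $h_1 h_2 h_3 = I$.

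To verify the $K$-triple product property, suppose $h_1 h_2 h_3 = I$ with $h_i \in H_i$. Writing $h_2 = D_2 O_2 D_2^{-1}$, $h_3 = D_3 O_3 D_3^{-1}$, and using the orthogonality $h_1^T h_1 = I$, I would derive the auxiliary identity $h_2^T h_2 \cdot h_3 h_3^T = I$. A second-order local analysis near each $K^3$-point, via $h_i = \exp(\epsilon X_i)$ with $X_i$ in the Lie algebra $V_i = D_i \mathfrak{so}(n) D_i^{-1}$, yields the first-order constraint $\pi(X_2) + \pi(X_3) = 0$ (where $\pi$ is symmetric projection) together with the second-order commutator relation $[\pi(X_2),\, A_X + A_Y] = 0$, where $A_X, A_Y$ are the antisymmetric parts of $X_2, X_3$. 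Under the linear constraint, the antisymmetric combination $A_X + A_Y$ turns out to be an explicit coordinate-wise diagonal rescaling of $\pi(X_2)$, with scalings depending on the ratios $d_{2,i}/d_{2,j}$ and $d_{3,i}/d_{3,j}$; the commutator condition together with the generic position of $D_2, D_3$ then forces $\pi(X_2) = 0$. Combined with $V_2 \cap V_3 = 0$, this gives $X_i = 0$ for all $i$, proving each $K^3$-point is an isolated solution.

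To upgrade this local isolation to the global $K$-triple product property, I would combine the compactness of $H_1 \times H_2 \times H_3$, local isolation at every $K^3$-point, and a global real-algebraic-geometry argument exploiting the positivity of $D_2, D_3$ to rule out real solutions outside $K^3$. This last step is the main obstacle: by \Cref{thm:alggroup} the complex solution variety has positive dimension for $n \geq 4$, so the argument cannot be purely algebraic-geometric and must genuinely use positivity of $D_2, D_3$ together with the real orthogonality constraints. The natural route is a deformation argument from the degenerate case $D_2 = D_3 = I$ (where the solution set collapses onto the conjugation orbit of $\O(n,\R)$) combined with a degree count, tailored to the generic-position hypothesis to ensure the real solution set collapses precisely to $K^3$.
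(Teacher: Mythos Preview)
Your proposal has a genuine gap: the global step is never actually carried out. You establish (modulo some details) that each $K^3$-point is an isolated solution of $h_1h_2h_3=I$, but you yourself identify the passage from local isolation to the global statement as ``the main obstacle,'' and then offer only a vague deformation/degree-count sketch. Since, as you note, the complex solution variety has positive dimension for $n\ge 4$, a degree argument cannot be purely topological or algebraic; it must exploit real positivity in a specific way, and you have not said how. Without this, the $K$-triple product property is not proved.

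The paper's proof avoids all of this machinery with an elementary inequality argument. The key is not merely that $D_2,D_3$ are ``in general position'' but that they are chosen with \emph{opposite monotonicity}: take $D_1=\diag(x_1,\dots,x_n)$ with $x_1>x_2>\cdots>x_n>0$ and $D_2=\diag(y_1,\dots,y_n)$ with $0<y_1<y_2<\cdots<y_n$, and let the three conjugates be $H$, $D_1HD_1^{-1}$, $D_2HD_2^{-1}$ with $H=\O(n,\R)$. From $h h_1 h_2^{-1}=I$ one deduces $h_1^\top h_1=h_2^\top h_2$ (your auxiliary identity, in slightly different form). Now compute the $(1,1)$ entry of each side: if the first column of $M_1\in H$ is $(a_1,\dots,a_n)$, then the $(1,1)$ entry of $h_1^\top h_1$ equals
\[
1+\sum_{i\ge 2}\bigl((x_i/x_1)^2-1\bigr)a_i^2 \ \le\ 1,
\]
with equality iff $a_2=\cdots=a_n=0$; the decreasing $x_i$ make every coefficient negative. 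By the same computation with the increasing $y_i$, the $(1,1)$ entry of $h_2^\top h_2$ is $\ge 1$. Equality of the two sides forces both first columns to be $\pm e_1$, so $h_1,h_2$ are block-diagonal with a $\pm1$ in the upper-left corner, and one finishes by induction on $n$. This is a direct, global argument---no Lie-algebra expansion, no compactness, no degree theory---and it pinpoints exactly the real positivity input (opposite monotonicity of the diagonals) that your sketch was missing.
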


This construction meets the packing bound. In particular, it evades the normalizer barrier since the normalizer of $\O(n,\R)$ in $\GL(n,\R)$ is $\R^\times \cdot \O(n,\R)$. However, it does not prove a bound for $\omega(\GL(n,\R))$, because each of these subgroups has dimension $n(n-1)/2$, which equals $(\dim \GL(n,\R) - n)/2$, and $r(\GL(n,\R))=n$. Note that the center of $\GL(n,\R)$ plays no role, and we could just as well have stated the theorem for conjugates of $\SO(n,\R)$ inside $\SL(n,\R)$. We use the slightly more general formulation since it is convenient to allow determinant~$-1$ in the proof by induction.

\begin{proof}
Let $G = \GL(n,\R)$ and $H = \O(n,\R)$. To specify the conjugates of $H$, we take
$D_1 = \diag(x_1, \dots, x_n)$ with $x_1 > x_2 > \dots > x_n > 0$ and $D_2 = \diag(y_1, \dots, y_n)$ with $0 < y_1 < y_2 < \dots < y_n$. Then we will show that $H$, $H_1\coloneqq D_1 H D_1^{-1}$, and $H_2\coloneqq D_2 H D_2^{-1}$ satisfy the $K$-triple product property in $G$, where $K$ is the group of diagonal $\pm 1$ matrices.

In particular, we will show that for every $h_1 \in H_1$ and $h_2 \in H_2$, if $h_1^\top h_1 =
h_2^\top h_2$, then $h_1, h_2 \in K$. The conclusion then follows, since if $hh_1h_2^{-1}=I$ with $h \in H$, then $h_1h_2^{-1} \in H$, meaning $(h_1h_2^{-1})^\top(h_1h_2^{-1}) = I$, and hence $h_1^\top h_1 = h_2^\top  h_2$.

Suppose that $h_1 = D_1M_1D_1^{-1}$ and $h_2 = D_2M_2D_2^{-1}$, where
$M_1, M_2 \in H$, and consider
\[h_1^\top h_1 = (D_1^{-1}M_1^\top D_1)(D_1M_1D_1^{-1}).\]
If $(a_1, a_2, \dots, a_n)$ is the first column of $M_1$, then the
first column of $D_1M_1D_1^{-1}$ is \[(a_1,x_2x_1^{-1}a_2,\dots,x_nx_1^{-1}a_n)\]
as is the first row of $D_1^{-1}M_1^\top D_1$,
so the
upper-left entry of their product $h_1^\top h_1$ is \[a_1^2 + (x_2/x_1)^2a_2^2 +
(x_3/x_1)^2a_3^2 + \dots + (x_n/x_1)^2a_n^2.\]
Now, since $a_1^2 + a_2^2 + \dots + a_n^2 = 1$, we can substitute for
$a_1^2$ to obtain
\[1 + ((x_2/x_1)^2 - 1)a_2^2 +
((x_3/x_1)^2 - 1)a_3^2 + \dots + ((x_n/x_1)^2 - 1)a_n^2.\]

Because $x_i>x_1$ for all $i>1$, this quantity is at most~$1$, with equality exactly
when $a_2^2 = a_3^2 = \dots = a_n^2 = 0$.
By an identical argument, if $(b_1, b_2, \dots, b_n)$ is the first column of $M_2 $, then the upper-left entry of $h_2^\top h_2$ is 
\[1 + ((y_2/y_1)^2 - 1)b_2^2 +
((y_3/y_1)^2 - 1)b_3^2 + \dots + ((y_n/y_1)^2 - 1)b_n^2,\]
 which is at least~$1$, with equality exactly
when $b_2^2 = b_3^2 = \dots = b_n^2 = 0$. So if $h_1^\top h_1 =
h_2^\top h_2$, then in particular their upper-left entries are equal, and 
we conclude that $a_1^2 = b_1^2=1$, while $a_i=b_i=0$ for all $i>1$.

Now $h_1$ has the form
\[\left (\begin{array}{c|ccc} \pm 1 & 0 & \dots & 0\\ \hline 0 \\ \vdots &
                                                                  &
                                                                    h_1'
           \\ 0  \end{array} \right ),\]
where $h_1'$ is an element of $D_1' \O(n-1,\R)D_1'^{-1}$ with $D_1' = \diag(x_2, \dots,
x_n)$, and $h_2$ has the form
\[\left (\begin{array}{c|ccc} \pm 1 & 0 & \dots & 0\\ \hline 0 \\ \vdots &
                                                                  &
                                                                    h_2'
           \\ 0  \end{array} \right ),\]
where $h_2'$ is an element of $D_2' \O(n-1,\R) D_2'^{-1}$ with $D_2' = \diag(y_2, \dots
y_n)$. Finally, since $h_1h_2^{-1}h =1$ we find $h$ also has the
same block-diagonal form, and so $h_1'(h_2')^{-1} \in \O(n-1,\R)$,
and then we are done by induction on $n$.
\end{proof}

\begin{remark}
This theorem holds when we replace $G$ with $\GL(n,\C)$ (resp.,~$\GL(n,\Quat)$), $H$ with $\U(n,\C)$ (resp.,~$\Sp(n)$), and $K$ with the group of diagonal matrices with unit complex (resp.,~quaternionic) numbers on the diagonal. This follows from a similar argument, where one replaces transpose with conjugate transpose and uses the positivity of the complex/quaternionic norm. The corresponding dimensions are shown in Table~\ref{table:dimensions}.
\end{remark}

\begin{table}
\caption{Parameters for the real, complex, and quaternionic versions of \Cref{thm:3conj}.}
\label{table:dimensions}
\begin{center}
\begin{tabular}{llll}
\toprule
(skew) field      & $\R$        & $\C$      & $\Quat$       \\
\midrule
$\dim G$ & $n^2$      & $2n^2$   & $4n^2$    \\
$r(G)$ & $n$      & $2n$   & $4n$    \\
$\dim H$ & $n(n-1)/2$ & $n^2$ & $n(2n+1)$ \\
$\dim K$ & $0$        & $n$      & $3n$    \\
Meets packing bound as $n \to \infty$ & yes & yes & yes \\
Lie exponent upper bound & $\infty$ & $6$ & $4$\\
\bottomrule
\end{tabular}
\end{center}
\end{table}

\section{Open problems}\label{sec:quest}

The most important challenge highlighted by this paper is to find a construction proving that the Lie exponent of a family of Lie groups approaches $2$, or to prove that such a construction cannot exist. 
Many questions can be asked along the way, including whether there is a Lie group with Lie exponent less than $3$, and whether the Lie exponent of $\SL(n,\R)$ is even finite.

It follows from \cite{sawin2018bounds} that triple product property constructions inside $\SL(2,q)^m$ cannot give $\omega = 2$ for fixed $q$ and growing $m$, and \Cref{thm:gowerstrick} shows that $\SL(n,q)^m$ cannot give $\omega = 2$ for fixed $m$ and growing $q$. The proofs of these two facts are quite different: one uses the polynomial method, and the other is Fourier analytic. Is there a common generalization of these two facts that would rule out obtaining $\omega = 2$ with $m$ and $q$ both growing?

Together with the fact that abelian groups cannot yield exponent less than 3, \Cref{thm:gowerstrick} implies that the alternating groups are the only simple groups left that could yield $\omega = 2$ via a triple product property construction. The representation-theoretic argument fails in this case, since $A_n$ has an irreducible representation of dimension $n-1$ but $|A_n| = n!/2$. Can an alternate argument rule out these groups?

\section*{Acknowledgments}
This work was supported by an AIM SQuaRE grant. 
J.~B.\ was supported by NSF grant DMS-1855784,   
J.~A.~G.\ was partially supported by NSF CAREER award CISE-2047756, and C.~U.\ was supported by NSF grant CCF-1815607 and a Simons Foundation Investigator award.

\newcommand{\etalchar}[1]{$^{#1}$}
\providecommand{\bysame}{\leavevmode\hbox to3em{\hrulefill}\thinspace}
\providecommand{\MR}{\relax\ifhmode\unskip\space\fi MR }
\providecommand{\MRhref}[2]{%
  \href{http://www.ams.org/mathscinet-getitem?mr=#1}{#2}
}
\providecommand{\href}[2]{#2}

\end{document}